 \newtheorem{thm}{Theorem}[section]
 \newtheorem{cor}[thm]{Corollary}
 \newtheorem{lem}[thm]{Lemma}
 \newtheorem{prop}[thm]{Proposition}
 \theoremstyle{definition}
 \newtheorem{defi}[thm]{Definition}
 \theoremstyle{remark}
 \newtheorem{rem}[thm]{Remark}
 \numberwithin{equation}{section}
\newenvironment{pf}{\noindent{\it Proof.} }{\fbox{}\\}
\newcommand{\+}{\dag}
\renewcommand{\d}{\mathrm{d}}
\newcommand{\e}{\mathrm{e}}
\renewcommand{\i}{\mathrm{i}}
\renewcommand{\Im}{\mathrm{Im }}
\renewcommand{\r}{\mathfrak{r}}
\newcommand{\dc}{\overline{\partial}}
\title{Studies on the \\Spectral Schwartz Distribution}
\author{Wilhelm von Waldenfels\\
Universit\"at Heidelberg}
\date{}
\begin{document}

\begin{abstract}
 The resolvent function of an operator in a Banach space  is defined on an open subset of the complex plane and is holomorphic.  
 It obeys the resolvent equation. A generalization of this equation to Schwartz distributions is defined and a Schwartz distribution, which
 satisfies that equation is called a resolvent distribution. 
 In important cases the resolvent distribution is the whole plane.
 Its restriction to the subset, where it is continuous, is the usual resolvent function.
 Its complex conjugate derivative is ,but a factor, the spectral Schwartz distribution,
which is carried by a subset of the spectral set of the operator. 
 The spectral distribution yields a spectral decomposition. We have a generalized orthogonality relation.
 Completeness is defined in a natural way and is the case e.g. if the operator is bounded.
The spectral distribution of a matrix and a unitary operator are given. If the the 
operator is a self-adjoint operator on a Hilbert space, the spectral distribution is the derivative of the spectral family. We calculate the
spectral distribution  and the eigen value problem of the multiplication operator and  of the multiplication operator with
a non symmetric rank one perturbation. The operator  is not normal
and may have discrete real or imaginary eigenvalues or a nontrivial Jordan decomposition.
\end{abstract}

\maketitle
{\bf Keywords}:  Schwartz distributions, Distribution Kernels, Resolvent,\\ Spectral Decomposition, Spectral Distribution, Multiplication Operator.\\
{\bf AMS classification}: 47A10 Spectrum,Resolvent;\\ 46A20 Distributions as Boundary Values of Analytic Functions, \\46A24 Distributions on infinite dimensional spaces.
\section{Definition and Basic Properties}
\subsection{Introduction}In a study on radiation transfer Garyi V. Efimov, Rainer Wehrse  and the author 
\cite{EvW} developed a method, how to calculate the
spectral decomposition of a  multiplication operator in $\mathbb{R}^n$ perturbed by a   finite rank operator.
This method was later applied in four examples coming from the theory of quantum stochastic processes \cite{vW}. 
In these cases the operator was defined on a Hilbert space of functions on the $\mathbb{R}^n$ and the resolvent
$R(z)$ had the property, that the scalar function $z \mapsto \langle f_1| R(z)| f_2  \rangle$, which is defined for
$z$ in the resolvent set, can be extended to the whole plane, if $f_1,f_2$ are $C^\infty$ functions with compact support.
The function $\langle f_1| R(z)| f_2  \rangle $  is holomorphic on the resolvent set. Hence its complex derivative
$ \dc \langle f_1| R(z)| f_2  \rangle$  is supported by the spectral set. In  \cite{vW} the spectral Schwartz distribution $M(z)$ was defined scalarly by
$$  \langle f_1| M(z)| f_2  \rangle = (1/\pi) \dc\langle f_1| R(z)| f_2  \rangle$$
From the so defined spectral distribution one could deduce the spectral decomposition namely the proper and the generalized eigen vectors. We proved by hand the orthogonality relations and the completeness.  Our examples were all self adjoint operators in a Hilbert space or in Krein space \cite{Langer}. We did however not use the corresponding theories but used
directly the theory of Schwartz distributions how it is explained in the books of L.Schwartz \cite {Schwartz1},\cite{Schwartz4} and in the books of Gelfand
and Vilenkin \cite{Gelfand1},\cite{Gelfand2} . We had the advantage that we did not  only get the existence of the spectral decomposition but the explicit form of
the eigen vectors.

Our treatment was unsatisfactory in the sense that we had to prove orthogonality and completeness by hand.  Therefor
we introduced
in this paper  a stronger version of the spectral Schwartz distribution. The spectral distribution is not only scalarly defined but an operator valued distribution.

The resolvent function obeys on the resolvent set
the {\em resolvent equation}. 
 A consequence of the resolvent equation is that the resolvent function is holomorhic on the resolvent set.
The resolvent equation can be generalized for Schwartz   distributions in a natural way. This equation  is called the {\em resolvent distribution equation}. It might happen, that the resolvent function can be extended to a Schwartz distribution on the whole plane which satisfies the resolvent distribution equation. We denote this distribution again by $R(z)$. It is holomorphic on the resolvent set.   
Therefor the conjugate complex derivative of $R(z)$ vanishes on the resolvent set and is supported by the spectral set.
We define the spectral Schwartz distribution by
$$ M(z)= (1/\pi) \dc R(z) .$$
The spectral distribution is a distribution with values  in a Banach algebra and has much stronger analytical properties than the only scalarly defined distribution above.
 It is a generalized eigen projector, 
 we have the usual orthogonality relations and we have completeness in many cases, especially if the resolvent function 
is the resolvent of a bounded operator. In example 5 we have a typical problem which was treated formerly by the scalar spectral distribution and is now treated by the operator valued spectral distribution. 

The paper is the correction and the improvement of my article in the arXiv \cite{vW1} and coincides to a large extent with that.

\subsection{Schwartz distributions}
We use in this paper mostly Schwartz distributions with values in a Banach space provided with the norm topology.
We recall the definition of a distribution given by L.Schwartz   \cite{Schwartz1}. A distribution on  an open set
 $G \subset\mathbb{R}^n$ is a linear functional $T$
 on the space of {\em test functions} $\mathcal{D}(G)$,
that is the space
of all real or complex valued  infinitely differentiable functions of compact support (  or $C_c^\infty$-functions) with support in $G$.   The functional $T$ 
mapping $\mathcal{D}(G)$ into a Banach space is continuous in the following way: if the functions $\varphi_n\in \mathcal{D}(G)$ have their support
in a common compact set and if they  and all their derivatives converge to 0 uniformly,
 then the $T(\varphi_n)\to 0$ in norm.The theory of distributions  with values in a Banach space or in a topological vector space is not essentially different from the theory of real distributions as it is pointed out by L.Schwartz \cite{Schwartz1}p.30. We use the results of the theory of real or complex  distributions without mentioning.
 
   A set $B \subset \mathcal{D}(G)$ is bounded, if there
exists a compact set $K$ in $G$ and sequence $(m_0,m_1,m_2,\cdots)$ of numbers $> 0$ such that all $\varphi \in B$ have their support 
in $K$ and their derivatives of order $\le k$ are bounded by $m_k$. If a sequence $T_j$ converges for any $\varphi$,
then it converges uniformly on bounded sets in $\mathcal{D}(G)$.

We write often
$$ T(\varphi)=\int T(x) \varphi(x) \d x.$$
The integral notation is the usual notation of physicists and  has been adopted by Schwartz with minor modifications in his articles
 on vector-valued distributions \cite{Schwartz2}\cite{Schwartz3}. It emphasizes the fact, that a distribution can be considered
as a {\em generalized function}. This is the notation in the Russian literature \cite{Gelfand1}.  Similar is
 de Rham's formulation \cite{Rham} : $T$ can be considered as a current of degree 0 applied to a form
of degree $n$ namely
$ \varphi(x_1,\cdots,x_n) \d x_1 \cdots \d x _n$. We use variable transforms of distributions accordingly.

We have often to deal with the so called
$\delta$-function given by $\delta(\varphi)=\varphi(0)$ and the generalized function
$ \mathcal{P}/x = (\d/\d x)\ln|x|$, where $\mathcal{P}$ stands for principal value. One has for $u \downarrow 0$
\begin{equation}
\frac{1}{x + \i u}= \frac{x}{x^2+u^2}-\frac{\i u}{x^2+u^2}\to \frac{1}{x+\i 0}= \frac{\mathcal{P}}{x} - \i \pi \delta(x) 
\end{equation}

In order to include  Dirac's original ideas, L.Schwartz introduced the {\em distribution kernels} \cite{Schwartz2}.
A distribution kernel $K(x,y)$ on $\mathbb{R}^m \times \mathbb{R}^n$ is a distribution on that space.
 If $T$ is a distribution on $\mathbb{R}^n$ one may associate to it a kernel $T(x-y)$ on
$\mathbb{R}^n \times \mathbb{R}^n$  by
$$ \iint T(x-y) \varphi(x) \psi(y) \d x \d y = \int \d x T(x)\varphi(x)\int \d y \psi(x-y)  ,$$
 We have
$$ \iint \delta(x-y) \varphi(x,y) \d x \d y = \int \d x \varphi(x,x).$$So
$ \delta(x-y)=\delta(y-x)$.

If $S(x,y)$ and $T(y,z)$ are two kernels one may form a distribution of three variables 
$S(x,y)T(y,z)$, if this exists. If the kernels are $S(x-y)$ and $T(y-z)$ then $S(x-y)T(y-z)$ exists and  is given by
\begin{multline*} \iiint \d x \d y \d z S(x-y)T(y-z) \varphi(x,y,z)\\= \iiint \d u \d v \d y S(u)T(v) \varphi(u+y,y,y-v).\end{multline*}

We obtain for any distribution $T$ the relation
\begin{equation}\delta(x-y)T(y-z)= \delta(x-y)T(x-z). \end{equation}
At the end of the paper we shall need th following lemma
\begin{lem}
We have the formulae 
\begin{align*} &\iiint \delta(x-y)\delta(y-z) \varphi(x,y,z)\d x \d y \d z=  \int \varphi(x,x,x) \d x \\&
\frac{\mathcal{P}}{x-\omega}\frac{\mathcal{P}}{y-\omega}=
\frac{\mathcal{P}}{y-x}\left(\frac{\mathcal{P}}{x-\omega} -\frac{\mathcal{P}}{y-\omega}
\right) + \pi^2 \delta(x-\omega)\delta(y-\omega) \end{align*}
\end{lem}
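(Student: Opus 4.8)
The two formulae are proved separately.

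\emph{First formula.} This is a direct specialization of the composition rule for kernels of the form $S(x-y)T(y-z)$ recorded above: taking $S=T=\delta$ in
$$\iiint \d x\,\d y\,\d z\, S(x-y)T(y-z)\varphi(x,y,z)=\iiint \d u\,\d v\,\d y\, S(u)T(v)\,\varphi(u+y,y,y-v)$$
and carrying out the $u$- and $v$-integrations via $\delta(\psi)=\psi(0)$ leaves $\int\varphi(y,y,y)\,\d y$, which is the claim. Equivalently, $\delta(x-y)\delta(y-z)$ is the pull-back of $\delta\otimes\delta$ by the submersion $(x,y,z)\mapsto(x-y,y-z)$; it is carried by the diagonal $\{x=y=z\}$ and acts there as one-dimensional integration, so nothing beyond the definition of the tensor product is needed.

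\emph{Second formula.} The plan is to replace the principal values by the boundary-value distributions of (1.1), apply the elementary partial-fraction identity in each factor, and reassemble. Using (1.1), write $\mathcal{P}/(x-\omega)=\tfrac12\big(\tfrac{1}{x-\omega+\i 0}+\tfrac{1}{x-\omega-\i 0}\big)$ and likewise in $y$, so that $\frac{\mathcal{P}}{x-\omega}\frac{\mathcal{P}}{y-\omega}$ becomes the average of the four tensor products $\frac{1}{x-\omega+\sigma\i 0}\frac{1}{y-\omega+\tau\i 0}$, $\sigma,\tau\in\{+,-\}$. For each, regularize by $\tfrac{1}{x-\omega+\sigma\i u}$, $\tfrac{1}{y-\omega+\tau\i v}$ ($u,v>0$) and use the exact identity of \emph{smooth} functions
$$\frac{1}{(x-\omega+\sigma\i u)(y-\omega+\tau\i v)}=\frac{1}{(y-x)+\i(\tau v-\sigma u)}\left(\frac{1}{x-\omega+\sigma\i u}-\frac{1}{y-\omega+\tau\i v}\right),$$
valid because the bracket has numerator exactly $(y-x)+\i(\tau v-\sigma u)$, so the apparent singularity on the diagonal $\{x=y\}$ cancels. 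Letting $u,v\downarrow 0$: when $\sigma=\tau$ one may take $v=u$ and the first factor converges to $\mathcal{P}/(y-x)$; when $\sigma\neq\tau$ it converges to $\tfrac{1}{y-x+\i 0}$ or $\tfrac{1}{y-x-\i 0}$, i.e. it acquires an extra term $\mp\i\pi\delta(y-x)$.

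It remains to add up the four limits. By (1.2) one has $\delta(x-y)\,\tfrac{1}{x-\omega+\sigma\i 0}=\delta(x-y)\,\tfrac{1}{y-\omega+\sigma\i 0}$, so $\delta(y-x)$ annihilates the bracket whenever the two $\i 0$-signs agree, while for $\sigma\neq\tau$ it turns the bracket into $\pm 2\pi\i\,\delta(y-\omega)$; hence each of the two cross terms contributes $2\pi^2\,\delta(x-\omega)\delta(y-\omega)$ and, with the overall factor $\tfrac14$, they sum to $\pi^2\,\delta(x-\omega)\delta(y-\omega)$. The remaining $\mathcal{P}/(y-x)$ parts of all four terms sum, after reassembling the boundary values into principal values by (1.1), to $\tfrac{\mathcal{P}}{y-x}\big(\tfrac{\mathcal{P}}{x-\omega}-\tfrac{\mathcal{P}}{y-\omega}\big)$, which is the asserted identity. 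The bookkeeping of the four sign combinations and of (1.1)--(1.2) is routine; the genuine obstacle is justifying the limit $u,v\downarrow 0$ in the displayed identity, i.e. that multiplication by the diagonal-singular distribution $1/\big((y-x)+\i(\cdot)\big)$ is continuous under the regularization, the difficulty being that its singular support meets that of the bracket (the lines $x=\omega$ and $y=\omega$) at $(\omega,\omega)$. I would settle this by testing against $\varphi\in\mathcal D(\mathbb R^2)$ and writing $\varphi(x,y)=\varphi(x,x)+(y-x)\psi(x,y)$ with $\psi$ smooth: the $(y-x)\psi$ term cancels the diagonal factor outright, and for the $\varphi(x,x)$ term the bracket is an ordinary locally integrable difference to which dominated convergence applies after the $y$-integration; alternatively one may invoke the standard continuity of products of distributions with transversal wave front sets.
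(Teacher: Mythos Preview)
Your argument is correct, but it takes a longer route than the paper's. The paper regularizes only \emph{once}, with the same sign $+\i u$ in both factors: from the smooth identity
\[
\frac{1}{(x-\omega+\i u)(y-\omega+\i u)}=\frac{1}{y-x}\Bigl(\frac{1}{x-\omega+\i u}-\frac{1}{y-\omega+\i u}\Bigr)
\]
the diagonal factor is already $\mathcal{P}/(y-x)$, since the bracket vanishes identically at $y=x$. Letting $u\downarrow 0$, the right-hand side becomes $\tfrac{\mathcal{P}}{y-x}\bigl(\tfrac{\mathcal{P}}{x-\omega}-\i\pi\delta(x-\omega)-\tfrac{\mathcal{P}}{y-\omega}+\i\pi\delta(y-\omega)\bigr)$, while the left-hand side is expanded as the product $\bigl(\tfrac{\mathcal{P}}{x-\omega}-\i\pi\delta(x-\omega)\bigr)\bigl(\tfrac{\mathcal{P}}{y-\omega}-\i\pi\delta(y-\omega)\bigr)$. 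After using (1.2) on the cross terms, the imaginary parts of the two sides match automatically and the real parts give the formula at once.

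Your four-sign average reaches the same destination, but the cross-sign cases $\sigma\neq\tau$ force the diagonal factor to acquire a $\pm\i\pi\,\delta(y-x)$ and create precisely the limit-justification issue you flag at the end (and handle via the Taylor remainder or wave-front transversality). The paper's single-sign choice sidesteps that difficulty entirely: no $\delta(y-x)$ ever appears on the right, and the only delicate product is the tensor product on the left, which is unproblematic. What your approach buys is a symmetric, mechanical bookkeeping that does not require spotting the real/imaginary-part comparison; what the paper's approach buys is brevity and the avoidance of any auxiliary argument about products of distributions near the triple diagonal.
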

\begin{pf}
The first equation is trivial, the second one is contained in \cite[p.74]{vW}. We present a somewhat easier proof. 
\begin{multline*}\frac{1}{x-\omega+\i u}\frac{1}{y-\omega+\i u}= \frac{1}{y-x}(\frac{1}{x-\omega+\i u}-\frac{1}{y-\omega+ \i u})\\
=\frac{\mathcal{P}}{y-x}(\frac{1}{x-\omega+\i u}-\frac{1}{y-\omega+\i u})\end{multline*}
as the factor of $\mathcal{P}/(y-x)$ vanishes for $y=x$. Go with $u \downarrow 0$ and obtain with the help of (1.1) and (1.2)
for the left hand side
\begin{multline*}\big (\frac{\mathcal{P}}{x-\omega}- \i \pi \delta(x-\omega)
\big)\big (\frac{\mathcal{P}}{y-\omega}- \i \pi \delta(y-\omega)\big)\\
=\frac{\mathcal{P}}{x-\omega}\frac{\mathcal{P}}{y-\omega}
- \pi^2 \delta(x-\omega)\delta(y-\omega) - \i \pi \delta(x-\omega) \frac{\mathcal{P}}{y-\omega}
-\i \pi \delta(y-\omega)\frac{\mathcal{P}}{x-\omega} \\
 =\frac{\mathcal{P}}{x-\omega}\frac{\mathcal{P}}{y-\omega}
- \pi^2 \delta(x-\omega)\delta(y-\omega) - \i \pi \delta(x-\omega) \frac{\mathcal{P}}{y-x}
-\i \pi \delta(y-\omega)\frac{\mathcal{P}}{x-y}\end{multline*}
For the right hand side we get
$$\frac{\mathcal{P}}{y-x}\bigg(\frac{\mathcal{P}}{x-\omega}-  \i \pi \delta(x-\omega)-
\frac{\mathcal{P}}{y-\omega}+ \i \pi \delta(y-\omega)\bigg)$$
Compare with the left hand side.
\end{pf}

\subsection{Notion of the resolvent function}
We define at first the notion of a resolvent function, as it is given by Hille and Phillipps \cite{Hille} under the name of pseudo-resolvent. 
Assume an  an open set $G\subset\mathbb{C}$ and a Banach algebra $B$ and
 a function $R(z):G\to B$ satisfying the {\em resolvent equation}
  \begin{equation}R(z_1)-R(z_2)=(z_2-z_1)R(z_1)R(z_2) \end{equation}
  then we call $R(z)$ a {\em resolvent function} on $G$. This equation implies, that $ R(z)$ is holomorphic in $G$, i.e.
  that $R(z)$ can be locally expanded into a power series
$R(z)=\sum_{n \ge 0} a_n (z-z_0)^n$ with $\sum |z-z_0|^n \|a_n\| < \infty$ for any $z_0\in G$. Hence the equation
makes sense
\begin{equation}
 R(z_1)R(z_2)= 1/(z_2-z_1)(R(z_1)-R(z_2)
\end{equation}

 Assume a Banach space $V$ and the Banach algebra $B=L(V)$ of operators $B \to B$ provided with the operator norm and assume a closed operator $A$ defined in $V$ with values in $V$. The subspace
 $\rho(A)\subset \mathbb{C}$, where $R(z)= (z-A)^{-1}$ exists,is called the {\em  resolvent set } and  $R(z)$ 
 the {\em resolvent function} of $A$. The set $\rho(A)$ is open and $ R(z): \rho(A)\to L(V)$ satisfies the resolvent equation.
  
Go the other way round and  assume  an open set $G\subset\mathbb{C}$ and
 a function $R(z):G\to L(V)$ satisfying the resolvent equation.
The function  $R(z)$
      is \emph {holomorphic} in $G$.
The subspace $ D= R(z)V$ is a subset independent of $z\in G$. If
$R(z_0)$ is injective for one $z_0\in G$, then $R(z)$ is injective for all $z\in
G$ and there exists a mapping $A:D\to V$ such that
 \begin{align*} (z-A)R(z)f&=f
\text{ for } f\in V & R(z)(z-A)f&=f \text{ for } f\in D \end{align*}

  The operator $A$ is closed and $R(z)$ is the resolvent of $A$\cite{Hille}.
 There exists not always such an operator corresponding to a resolvent , e.g. the function $R(z)=0$ for all $z$ fulfills the 
 resolvent equation and $R(z)$ is surely not injective.

\subsection{Definition of the resolvent distribution} 
The equation (1.4) can easily be generalized to Schwartz distribution using the fact, that $z \mapsto 1/z$ is locally integrable.
\begin{defi} Assume a Schwartz distribution $\varphi \mapsto	R(\varphi)$ on an  open set $G\subset \mathbb{C}$
, which satisfies the equation
\begin{multline*}R(\varphi_1)R(\varphi_2)= \int \d^2 z_1R(z_1) \varphi_1(z_1)
\int \d^2 z_2 \varphi_2 (z_2) /(z_2-z_1)\\
- \int \d^2 z_2R(z_2) \varphi_2(z_2)
\int \d^2 z_1 \varphi_1 (z_1) /(z_2-z_1)\end{multline*}
then we say, that $R(\varphi)$ satisfies the {\em distribution resolvent equation} and call $R(\varphi)$ a {\em  resolvent distribution}.
Here $\d^2 z = \d x \d y$ is the surface elament on the complex plane for $z=x + \i y$.
\end{defi}

 If $R(z)$ obeys the resolvent equation in $G$
 it might be, that there is Schwartz distribution on  an open set $G' \supset G$
extending $R(z)$ and obeying
 the distribution resolvent equation.
Assume a distribution $R(z)$ on an open set $G'$ satisfying the distribution resolvent equation , whose restriction
on  an open subset $G \subset G'$ equals a continuous function, then the two integrals on the right hand side 
are usual integrals over locally integrable functions on $G$ and hence $R(z)$ 
satisfies the usual resolvent equation on $G$ and is holomorphic.

\subsection{Complex conjugate derivation}

    If $G\subset \mathbb{C}$ is open and $f:G\to \mathbb{C},f(z)=f(x+\i y)$ has
a continuous derivative, set
                 \begin{align} \partial f &=\frac{\d f}{\d z} =
                 \frac{1}{2}\left(\frac{\partial f}{\partial x}-  \i \frac
{\partial f}{\partial y}\right)&
                 \overline{\partial} f& =\frac{\d f}{\d
\overline{z}}
     =
                 \frac{1}{2}\left(\frac{\partial f}{\partial x}+  \i
\frac {\partial
  f}{\partial y}\right).\end{align}
    The operator $\partial$ is the usual derivative, $\overline{\partial}$ is called the {\em complex conjugate derivative}.
     The function $f$ is
 holomorphic if and
       only if $\overline{\partial}f =0$.
       
         In an analogous way
  one defines these derivatives for
                 Schwartz distributions. If $T$ is a distribution on $\mathbb{C}$, then $\overline{\partial} T(\varphi)=-T(\overline{\partial}\varphi)$. If $T$ is  distribution in an open set and $\dc T = 0$,  then $T$ is a holomorphic function in the usual sense \cite {Schwartz4},p.72.

                 We need the following variant of Gauss's theorem

   \begin{lem}Assume $G\subset \mathbb{C}$ an open subset , $f:G\to \mathbb{C}$ continuously differentiable and $G_0\subset G$
  an open subset with compact closure and smooth border $G_0'$, 
  then
  $$ \int_{G_0} \overline{\partial} f(z)\d^2z= -\frac{\i}{2}\int_{G_0'}f(z) \d z$$
   Here $\d z$ denotes the line element directed in the sense, that the interior of $G_0$ is on the left hand side\end{lem}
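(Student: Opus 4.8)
The plan is to deduce this directly from the classical Green's theorem (equivalently, the two-dimensional divergence theorem), which applies to $G_0$ precisely because it has compact closure and smooth boundary $G_0'$. First I would expand the complex line element as $\d z = \d x + \i\,\d y$, so that
$$\int_{G_0'} f(z)\,\d z = \int_{G_0'}\bigl(f\,\d x + \i f\,\d y\bigr),$$
which is the line integral of the $1$-form $P\,\d x + Q\,\d y$ with $P = f$ and $Q = \i f$. Both coefficients are continuously differentiable on $G_0$ by hypothesis; Green's theorem is $\mathbb{C}$-linear in the integrand, so it applies to these complex-valued coefficients directly (or one may split $f = \Re f + \i\,\Im f$ and invoke the real statement twice).

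Next I would apply Green's theorem with the boundary $G_0'$ positively oriented --- that is, traversed so that the interior of $G_0$ lies on the left, which is exactly the orientation prescribed in the statement --- to get
$$\int_{G_0'}\bigl(P\,\d x + Q\,\d y\bigr) = \int_{G_0}\left(\frac{\partial Q}{\partial x} - \frac{\partial P}{\partial y}\right)\d^2 z.$$
Substituting $P = f$, $Q = \i f$ and using the definition (1.5) of $\dc$,
$$\frac{\partial Q}{\partial x} - \frac{\partial P}{\partial y} = \i\,\frac{\partial f}{\partial x} - \frac{\partial f}{\partial y} = \i\left(\frac{\partial f}{\partial x} + \i\,\frac{\partial f}{\partial y}\right) = 2\i\,\dc f,$$
so that $\int_{G_0'} f(z)\,\d z = 2\i \int_{G_0} \dc f(z)\,\d^2 z$.

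Finally I would divide by $2\i$ and use $1/(2\i) = -\i/2$ to obtain $\int_{G_0}\dc f(z)\,\d^2 z = -\frac{\i}{2}\int_{G_0'} f(z)\,\d z$, which is the claimed identity. I do not expect a real obstacle here: the only points to watch are that the orientation convention built into Green's theorem agrees with the one fixed in the statement (it does), and that $C^1$-regularity of $f$ together with smoothness of $G_0'$ suffices to justify the classical boundary-integral formula, which it does. Everything else is bookkeeping of the factors $\i$ and $\tfrac12$.
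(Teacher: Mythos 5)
Your proof is correct, and the computation of the factors is right: with $P=f$, $Q=\i f$ one gets $\partial_x Q-\partial_y P=\i(\partial_x f+\i\partial_y f)=2\i\,\dc f$, and $1/(2\i)=-\i/2$ gives the stated sign. The paper itself states this lemma without proof, simply citing it as a variant of Gauss's theorem, so your Green's-theorem derivation is exactly the standard argument the author is implicitly relying on; there is nothing to add or correct.
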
    
   We cite three classical results \cite{Schwartz1}
   p.38f  
   \begin{prop}         
 The
  function $z\mapsto 1/z$ is
                                     locally integrable and one has
\begin{equation}
   \overline{\partial}(1/z)=\pi\delta(z).\end{equation}
   The function $z \mapsto 1/z^{n+1}$ is not integrable in the origin for $n>0$ One defines the distribution
   $$ \mathcal{P}/z^{n+1}=(-1)^n / n! \partial^n(1/z). $$
   One obtains 
   $$ \int \d ^2 z (\mathcal{P}/ z^{n+1}) \varphi (z)= \lim_{\varepsilon \downarrow 0}
   \int_{|z| \ge \varepsilon} (1/z^{n+1}) \varphi(z)$$
   and
    \begin{equation}
\dc (\mathcal{P}/z^{n+1})=\frac{(-1)^n \pi} { n!} \partial^n \delta(z).
\end{equation}
                 Assume $f$ to be defined
and
     holomorphic for the elements $x+\i y \in G, y\ne 0$, and that $f(x+\pm \i 0)$
 exists and is continuous, then 
              \begin{equation}\overline{\partial}f(x+\i y)= (\i/2)(f(x+\i 0)-f(x-\i 0))\delta(y)\end{equation}
              The last equation holds for operator valued functions as well.
              \end{prop}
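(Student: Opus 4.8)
The plan is to establish the three statements in sequence, the basic engine being Lemma~1.5 (the Gauss formula for $\dc$) together with the local integrability of $1/z$. \emph{First statement.} Local integrability of $1/z$ near $0$ is clear: in polar coordinates $\d^2z=r\,\d r\,\d\theta$ absorbs the factor $1/r=1/|z|$. For $\dc(1/z)=\pi\delta$ I would fix $\varphi\in\mathcal{D}(\mathbb{C})$ with $\mathrm{supp}\,\varphi\subset\{|z|<R\}$ and write $\dc(1/z)(\varphi)=-\int(1/z)\dc\varphi\,\d^2z=-\lim_{\varepsilon\downarrow 0}\int_{|z|\ge\varepsilon}(1/z)\dc\varphi\,\d^2z$. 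On the annulus $\varepsilon<|z|<R$ the function $1/z$ is holomorphic, so $(1/z)\dc\varphi=\dc((1/z)\varphi)$, and Lemma~1.5 converts the area integral into a contour integral over the boundary. The outer circle contributes nothing since $\varphi$ vanishes there; the inner circle $|z|=\varepsilon$, traversed clockwise so that the annulus lies to the left, yields $(\i/2)\int_{|z|=\varepsilon,\,\mathrm{ccw}}(1/z)\varphi(z)\,\d z$, which tends to $(\i/2)\cdot 2\pi\i\,\varphi(0)=-\pi\varphi(0)$ by continuity of $\varphi$. Hence $\dc(1/z)(\varphi)=\pi\varphi(0)$.

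\emph{Second statement.} That $1/z^{n+1}$ is not integrable at $0$ for $n\ge 1$ follows since $r^{-(n+1)}\cdot r=r^{-n}$ is not integrable near $0$; the distribution $\mathcal{P}/z^{n+1}=\frac{(-1)^n}{n!}\partial^n(1/z)$ is then meaningful as an iterated distributional derivative of the locally integrable $1/z$. For the finite-part formula I would unfold the definition to $\frac{(-1)^n}{n!}\partial^n(1/z)(\varphi)=\frac{1}{n!}\lim_{\varepsilon\downarrow 0}\int_{|z|\ge\varepsilon}(1/z)\partial^n\varphi\,\d^2z$ and integrate by parts $n$ times on $|z|\ge\varepsilon$, using the $\partial$-version of Lemma~1.5, $\int_{G_0}\partial F\,\d^2z=\frac{\i}{2}\int_{G_0'}F\,\d\overline{z}$. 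On $\{|z|\ge\varepsilon\}$ one has $\partial^k(1/z)=(-1)^kk!/z^{k+1}$ classically, so the interior terms assemble to $n!\int_{|z|\ge\varepsilon}(1/z^{n+1})\varphi\,\d^2z$, cancelling the $1/n!$, while the boundary terms are integrals over $|z|=\varepsilon$ of $z^{-j}(\partial^{n-j}\varphi)$ for $1\le j\le n$, which I claim vanish as $\varepsilon\downarrow 0$. The last assertion $\dc(\mathcal{P}/z^{n+1})=\frac{(-1)^n\pi}{n!}\partial^n\delta$ is then immediate, since $\dc$ commutes with $\partial^n$ and $\dc\mathcal{P}/z^{n+1}=\frac{(-1)^n}{n!}\partial^n\dc(1/z)=\frac{(-1)^n}{n!}\partial^n(\pi\delta)$.

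\emph{Third statement.} I would split $\mathbb{C}$ into $\{y>0\}$ and $\{y<0\}$, remove from each a strip $\{|y|<\varepsilon\}$, and (harmlessly, $\varphi$ having compact support) work in a large box. On $\{y>\varepsilon\}$ the function $f$ is holomorphic, so $f\dc\varphi=\dc(f\varphi)$ and Lemma~1.5 gives a line integral along $y=\varepsilon$; with the orientation keeping the upper region on the left this equals $-(\i/2)\int_{\mathbb{R}}f(x+\i\varepsilon)\varphi(x+\i\varepsilon)\,\d x$, which tends to $-(\i/2)\int_{\mathbb{R}}f(x+\i 0)\varphi(x,0)\,\d x$ by dominated convergence, using continuity of $f(x+\i 0)$ and the support of $\varphi$. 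The half-plane $\{y<-\varepsilon\}$ gives, with the reversed orientation, $+(\i/2)\int_{\mathbb{R}}f(x-\i 0)\varphi(x,0)\,\d x$. Adding and recalling $\dc f(\varphi)=-\int f\dc\varphi\,\d^2z$ yields $\dc f(\varphi)=(\i/2)\int_{\mathbb{R}}\big(f(x+\i 0)-f(x-\i 0)\big)\varphi(x,0)\,\d x$, which is the claim; for operator-valued $f$ the same computation applies verbatim, the Banach-valued Gauss theorem and Bochner integrals behaving exactly as in the scalar case.

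The step I expect to cost the most care is the vanishing of the boundary circle integrals in the second statement: the crude size estimate $O(\varepsilon^{1-j})$ looks divergent for $j\ge 2$, and one must observe that the angular integral $\int_0^{2\pi}e^{-\i(j+1)\theta}\,\d\theta$ annihilates the leading term, while a Taylor expansion of $\partial^{n-j}\varphi$ about the origin shows every surviving contribution carries a strictly positive power of $\varepsilon$. Beyond that, the only genuine bookkeeping is keeping the contour orientations in Lemma~1.5 consistent across the annulus and the half-strips.
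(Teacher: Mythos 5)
Your proof is correct and follows essentially the same route as the paper: excising a disk and applying the Gauss lemma for $\overline{\partial}(1/z)=\pi\delta$, commuting $\overline{\partial}$ with $\partial^n$ plus a Taylor expansion of $\varphi$ for the $\mathcal{P}/z^{n+1}$ statements, and the half-plane/strip decomposition for the jump formula. You in fact supply the details (orientation bookkeeping, and the angular-integral cancellation of the apparently divergent boundary terms) that the paper compresses into ``shown in a similar way'' and ``expand $\varphi$ into a Taylor series.''
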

 \begin{pf}             
We prove equation (1.6). 
  \begin{multline*} \int \d ^2 z\dc (1/z) \varphi(z)  = -  \int \d^2 z (1/z) \dc \varphi(z)
  =    - \lim _{\varepsilon \downarrow 0}  \int_{|z| \ge \varepsilon} \d^2 z (1/z) \dc \varphi(z)  
\\=   - \lim _{\varepsilon \downarrow 0}  \int_{|z| \ge \varepsilon} \d^2 z  \dc((1/z)  \varphi(z))     
= - \lim _{\varepsilon \downarrow 0}  \int_{C(\varepsilon)} \d z (1/z)  \varphi(z) 
 \to \pi \varphi(0)\end{multline*}   
 where $C(\varepsilon)$ is the circle around the origin run in the counter clockwise sense. 
 Equation (1.8) is shown in a smilar way.     In order to prove (1.7) one has to expand $\varphi$ into a Taylor series.    
\end{pf}

\subsection{Definition of the spectral Schwartz distribution and basic results}
\begin{defi} Assume $R(z)$ satisfying the distribution resolvent equation, then we call
 $$ M(z)=(1/\pi)\overline{\partial}R(z)$$
the {\em spectral Schwartz distribution} of $R(z)$.
\end{defi}

If the resolvent distribution $R(z)$ is in an open set  the extension of
 the resolvent  function of an operator $A$, we call $M(z)$  a spectral distribution of $A$. Remark,
 that there may be many spectral distributions of $A$.  
  As $R(z)$ is holomorphic on an open subset, where it is continuous,   $M(z)$ is supported by the spectral set of $R(z)$.    
  
  Using the abbreviation $\r(z)=1/z$ we may write the the distribution resolvent equation  in the form
  \begin{equation} R(\varphi_1)R(\varphi_2)=-R((\r*\varphi_1)\varphi_2+\varphi_1(\r*\varphi_2))\end{equation}  
  where the $*$ denotes the convolution.
  The following theorem shows, that the spectral distribution can be considered as the generalization of the family of eigen projectors
  in the finite dimensional case. The operator $M(z)$ corresponds to an eigen projector of the eigen value $z$.
   We have a generalized orthogonality relation and the fact, that the product with the resolvent
  means inserting the corresponding eigenvalue.
  
  \begin{thm} The spectral distribution is multiplicative. So assume  two $C_c^\infty$ functions $\varphi_1,\varphi_2$,
  then
    \begin{align*}&M(\varphi_1)M(\varphi_2)= M(\varphi_1\varphi_2)&\text{or}&\quad
             M(z_1)M(z_2)=\delta(z_1-z_2)M(z_1).\end{align*}
            Conversely
 assume a distribution $M(z)$  in $ G \subset \mathbb{C}$  obeying the equation\\
            $ M(z_1)M(z_2)=\delta(z_1-z_2)M(z_1).$
             Assume that the integral 
             \begin{align*}&
             R(z)= \int \d^2 \zeta  M(\zeta)/(z-\zeta)= (M*\mathfrak{r})(z)\\&
            \int R(z) \varphi(z)\d^2 z= \int M(\zeta)\d^2 \zeta \int \varphi(z) /(z-\zeta) \d^2 z \end{align*}
            exists for $\varphi\in C_c^\infty$, then $R(z)$ fulfills the resolvent equation for distributions.
            Furthermore
          $$    M(z_1)R(z_2)=  M(z_1) /(z_2-z_1), $$
       \begin{multline*}  \int M(z_1)\varphi_1(z_1)\d ^2z_1\int R(z_2)\varphi_2(z_2) \d ^2z_2\\
= \int M(z_1) \varphi_1\d ^2 z_1\int \varphi_2(z_2)/(z_2-z_1)\d^2 z_2
\end{multline*}
            \end{thm}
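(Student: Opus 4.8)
\emph{Reduction to two identities.} I would first isolate the two facts that make everything formal. By the definition of the distributional conjugate derivative, $M(\varphi)=(1/\pi)\dc R(\varphi)=-(1/\pi)R(\dc\varphi)$, equivalently $R(\dc\psi)=-\dc R(\psi)=-\pi M(\psi)$ for every smooth $\psi$ (of compact support, so that $\dc\psi$ is an admissible argument). Second, by Proposition~1.6, $\dc\r=\pi\delta$, so for every $\chi\in C_c^\infty$ one has $\r*\dc\chi=(\dc\r)*\chi=\pi\chi$, while the Leibniz rule $\dc(\varphi\psi)=(\dc\varphi)\psi+\varphi(\dc\psi)$ holds for smooth factors. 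Throughout I use the distribution resolvent equation in the compact form (1.9), $R(\varphi_1)R(\varphi_2)=-R\big((\r*\varphi_1)\varphi_2+\varphi_1(\r*\varphi_2)\big)$; note the arguments occurring there stay compactly supported because one factor is always a genuine test function.

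\emph{Multiplicativity.} Here I compute $\pi^2 M(\varphi_1)M(\varphi_2)=R(\dc\varphi_1)R(\dc\varphi_2)=-R\big((\r*\dc\varphi_1)\dc\varphi_2+\dc\varphi_1(\r*\dc\varphi_2)\big)$ by (1.9). Substituting $\r*\dc\varphi_i=\pi\varphi_i$, the argument of $R$ collapses to $\pi(\varphi_1\dc\varphi_2+\varphi_2\dc\varphi_1)=\pi\,\dc(\varphi_1\varphi_2)$, whence $\pi^2 M(\varphi_1)M(\varphi_2)=-\pi R(\dc(\varphi_1\varphi_2))=\pi^2 M(\varphi_1\varphi_2)$. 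The kernel form $M(z_1)M(z_2)=\delta(z_1-z_2)M(z_1)$ is then just the restatement of $M(\varphi_1)M(\varphi_2)=M(\varphi_1\varphi_2)$ via the $\delta$-kernel identity of Lemma~1.3 (the product distribution $M(z_1)M(z_2)$ on $\mathbb{C}^2$ is legitimate because it is $\pi^{-2}\dc_{z_1}\dc_{z_2}$ applied to the distribution kernel $R(z_1)R(z_2)$ supplied by the resolvent equation, and two distributions agreeing on all products $\varphi_1\otimes\varphi_2$ agree).

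\emph{Orthogonality with the resolvent.} Similarly $\pi M(\varphi_1)R(\varphi_2)=-R(\dc\varphi_1)R(\varphi_2)=R\big((\r*\dc\varphi_1)\varphi_2+\dc\varphi_1(\r*\varphi_2)\big)=R\big(\pi\varphi_1\varphi_2+(\dc\varphi_1)(\r*\varphi_2)\big)$. Writing $(\dc\varphi_1)(\r*\varphi_2)=\dc(\varphi_1(\r*\varphi_2))-\varphi_1\dc(\r*\varphi_2)=\dc(\varphi_1(\r*\varphi_2))-\pi\varphi_1\varphi_2$, the two $\pi\varphi_1\varphi_2$ terms cancel and $\pi M(\varphi_1)R(\varphi_2)=R(\dc(\varphi_1(\r*\varphi_2)))=-\pi M(\varphi_1(\r*\varphi_2))$. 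Since $(\r*\varphi_2)(z_1)=-\int\varphi_2(z_2)/(z_2-z_1)\,\d^2z_2$, this is exactly $\int M(z_1)\varphi_1(z_1)\,\d^2z_1\int\varphi_2(z_2)/(z_2-z_1)\,\d^2z_2$, i.e. $M(z_1)R(z_2)=M(z_1)/(z_2-z_1)$. (Again $\varphi_1(\r*\varphi_2)$ is compactly supported, so no extension of $M$ is needed.)

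\emph{Converse and the main obstacle.} Assuming $M(z_1)M(z_2)=\delta(z_1-z_2)M(z_1)$ and that $R=M*\r$ exists, one has $R(\varphi)=-M(\r*\varphi)$, hence $R(\varphi_1)R(\varphi_2)=M(\r*\varphi_1)M(\r*\varphi_2)=M\big((\r*\varphi_1)(\r*\varphi_2)\big)$ by the multiplicativity hypothesis. It then suffices to prove the pointwise identity of smooth functions $(\r*\varphi_1)(\r*\varphi_2)=\r*\big((\r*\varphi_1)\varphi_2+\varphi_1(\r*\varphi_2)\big)$, since the right side then equals $-R\big((\r*\varphi_1)\varphi_2+\varphi_1(\r*\varphi_2)\big)$ and (1.9) holds. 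That identity I would prove by applying $\dc$: both sides have $\dc$ equal to $\pi\big((\r*\varphi_1)\varphi_2+\varphi_1(\r*\varphi_2)\big)$ (Leibniz plus $\dc\r=\pi\delta$ on the left, $\dc\r=\pi\delta$ on the right), so their difference is entire, and since $\r*\chi=O(1/|z|)$ at infinity for $\chi$ of compact support the difference vanishes at infinity, so by Liouville it is $0$. The one genuinely delicate point of the whole proof is here: $M$ must be applied to the non-compactly-supported smooth functions $\r*\varphi$ and $(\r*\varphi_1)(\r*\varphi_2)$, which is precisely why the hypothesis ``the integral exists'' is imposed; one has to check that $\varphi\mapsto M(\r*\varphi)$ is a bona fide extension of $M$ for which the convolution and Leibniz manipulations above, and the multiplicativity identity, remain valid on these arguments. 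The rest is bookkeeping with (1.6) and the product rule.
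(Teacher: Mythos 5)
Your argument is correct and, for the forward direction, is identical to the paper's: multiplicativity and the identity $M(\varphi_1)R(\varphi_2)=-M(\varphi_1(\mathfrak{r}*\varphi_2))$ are both obtained from $\overline{\partial}\mathfrak{r}=\pi\delta$ (hence $\mathfrak{r}*\overline{\partial}\varphi=\pi\varphi$), the Leibniz rule, and the compact form (1.9) of the resolvent distribution equation, exactly as in the text. The one place you genuinely diverge is the converse. The paper writes $M\bigl((\mathfrak{r}*\varphi_1)(\mathfrak{r}*\varphi_2)\bigr)=-\pi^{-1}R\bigl(\overline{\partial}((\mathfrak{r}*\varphi_1)(\mathfrak{r}*\varphi_2))\bigr)$, i.e.\ it applies the relation $M(\psi)=-\pi^{-1}R(\overline{\partial}\psi)$ to the non-compactly-supported product $\psi=(\mathfrak{r}*\varphi_1)(\mathfrak{r}*\varphi_2)$ without comment; unwinding that step via $R(\chi)=-M(\mathfrak{r}*\chi)$ shows it amounts precisely to the identity $(\mathfrak{r}*\varphi_1)(\mathfrak{r}*\varphi_2)=\mathfrak{r}*\bigl((\mathfrak{r}*\varphi_1)\varphi_2+\varphi_1(\mathfrak{r}*\varphi_2)\bigr)$, which you prove explicitly by comparing $\overline{\partial}$ of both sides and invoking Liouville on the difference. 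So your route supplies a justification the paper leaves implicit, and it has the advantage that the only argument fed to $R$ is the compactly supported $\chi=(\mathfrak{r}*\varphi_1)\varphi_2+\varphi_1(\mathfrak{r}*\varphi_2)$. The remaining delicacy --- that the multiplicativity hypothesis $M(z_1)M(z_2)=\delta(z_1-z_2)M(z_1)$ must be evaluated against the non-compactly-supported pair $(\mathfrak{r}*\varphi_1)\otimes(\mathfrak{r}*\varphi_2)$ --- is present in both proofs; you at least flag it, the paper does not, and at the paper's level of rigor this is acceptable.
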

            \begin{pf}
            Use the equation
            $ \overline{\partial} \r = \pi \delta$
and hence
$$ \r*\dc \varphi= \r*\dc	\delta * \varphi = \dc \delta * \r * \varphi =\dc\r*\varphi= \pi \delta *\varphi = \pi \varphi$$ and
 calculate
\begin{multline*}M(\varphi_1)M(\varphi_2) = \pi^{-2} R(\dc\varphi_1 )R(\dc \varphi_2)= -\pi^{-2}R((\r*\dc \varphi_1)\dc\varphi_2
+\dc \varphi_1(\r*\dc \varphi_2))\\ 
= -\pi	R(\varphi_1 \dc \varphi_2 +( \dc \varphi_1)\varphi_2)=-\pi R(\dc(\varphi_1\varphi_2))	=M(\varphi_1 \varphi_2).\end{multline*}
In order to prove the converse assertion, observe that it means that $\r*M$ is a resolvent distribution. Now
$M(\varphi)=-1/\pi R(\dc \varphi)$ and $(\r*M)(\varphi)=-M(\r*\varphi)$. Hence
   \begin{multline*}R(\varphi_1)R(\varphi_2)=M(\r*\varphi_1)M(\r*\varphi_2)= 
   M((\r*\varphi_1)(\r*\varphi_2))\\
   =-1/\pi R(\dc((\r*\varphi_1)(\r*\varphi_2))=-R((\r*\varphi_1)\varphi_2+\varphi_1(\r*\varphi_2))\\
\end{multline*}
The last equation says 
$M(\varphi_1)R(\varphi_2) =-M(\varphi_1(\r*\varphi_2))$.
In fact
\begin{multline*}M(\varphi_1)R(\varphi_2)  = -\dfrac{1}{\pi} R(\dc \varphi_1)R(\varphi_2)=
\dfrac{1}{\pi	} R((\r*\dc \varphi_1)\varphi_2 + \dc \varphi_1 (\r*\varphi_2))\\=
\dfrac{1}{\pi	} R(\pi \varphi_1\varphi_2 + \dc \varphi_1 (\r*\varphi_2))
=\dfrac{1}{\pi}R(\dc (\varphi_1(\r*\varphi_2))=-M(\varphi_1(\r*\varphi_2)) .\end{multline*}
\end{pf}
The operator $M(z)$ is a generalized eigen projector. In fact if $R(z)=1/(z-A)$ then\\
$$M(z_1)(1/(z_2-A)) =  M(z_1) /(z_2-z_1)$$
and the relation
$$ AR(z)=R(z)A=-1 +zR(z)$$
yields
$$ AM(z)=M(z)A= zM(z)$$
as $\dc z =0$.
 The relation $M(z_1)M(z_2)=\delta(z_1-z_2)M(z_1)$ is a generalized orthonormality relation.
 
   \begin{prop} If $R$ is the resolvent of a bounded operator $A$ and  can be extended to a a resolvent distribution  defined
   on the whole complex plane and called $R$ again, then
   $$AR(z) = R(z) A= -1 + zR(z)$$ 
 $$ AR(\varphi)= R(\varphi)A=\int \d ^2 z \varphi(z)(-1 + zR(z)) $$
\end{prop}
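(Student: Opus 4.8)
The plan is to prove the distributional identity $AR(z)=R(z)A=-\1+zR(z)$ on all of $\mathbb{C}$, where $\1$ is the unit of $L(V)$; the statement involving $\varphi$ is then just this identity paired against a test function, using that $A$ is bounded, hence continuous, so that $AR(\varphi)=A\,R(\varphi)$ and $R(\varphi)A=R(\varphi)\,A$, and that $\langle zR,\varphi\rangle=R(z\varphi)$ (which holds because $\dc z=0$, so $z\,\dc\varphi=\dc(z\varphi)$). On the resolvent set $\rho(A)$ the distribution $R$ coincides with the holomorphic function $(z-A)^{-1}$, so the ordinary resolvent identity gives $(z-A)R(z)=\1=R(z)(z-A)$, i.e.\ $AR(z)=-\1+zR(z)=R(z)A$ there; only the extension across $\sigma(A)$ has to be proved. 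The mechanism is to insert into the distribution resolvent equation a mollifier concentrating at a point $z_0\in\rho(A)$ chosen outside the support of the test function, and then to cancel the factor $R(z_0)$ by means of $(z_0-A)R(z_0)=\1$.

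In detail: fix $\chi\in C_c^\infty(\mathbb{C})$ and choose $z_0\in\rho(A)$ with $z_0\notin\mathrm{supp}\,\chi$, which is possible since $\rho(A)\supseteq\{|z|>\|A\|\}$ is unbounded while $\mathrm{supp}\,\chi$ is compact. Put $\varphi_2=(z-z_0)\chi$ and let $\varphi_1^{(\varepsilon)}$ be a mollifier, $\int\varphi_1^{(\varepsilon)}\,\d^2z=1$, with support shrinking to $\{z_0\}$ inside $\rho(A)$. Plug $\varphi_1^{(\varepsilon)},\varphi_2$ into the distribution resolvent equation and let $\varepsilon\downarrow 0$. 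In the first term on the right, $\int R(z_1)\varphi_1^{(\varepsilon)}(z_1)\big(\int\varphi_2(z_2)/(z_2-z_1)\,\d^2z_2\big)\d^2z_1\to R(z_0)\int\chi\,\d^2z$, since $R$ is continuous near $z_0$ and the smooth function $z_1\mapsto\int\varphi_2(z_2)/(z_2-z_1)\,\d^2z_2=-(\r*\varphi_2)(z_1)$ has value $\int\chi\,\d^2z$ at $z_1=z_0$. In the second term, $\int\varphi_1^{(\varepsilon)}(z_1)/(z_2-z_1)\,\d^2z_1\to 1/(z_2-z_0)$ with all $z_2$-derivatives, uniformly on $\mathrm{supp}\,\chi$ (because $z_0$ is at positive distance from it), so $\varphi_2(z_2)\int\varphi_1^{(\varepsilon)}(z_1)/(z_2-z_1)\,\d^2z_1\to\chi(z_2)$ in $\mathcal{D}(\mathbb{C})$ and the term tends to $R(\chi)$. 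Hence $R(z_0)R(\varphi_2)=R(z_0)\int\chi\,\d^2z-R(\chi)$. Left-multiplying by $z_0-A$ and using $(z_0-A)R(z_0)=\1$ gives $R(\varphi_2)=\big(\int\chi\,\d^2z\big)\1+(A-z_0)R(\chi)$, while directly $R(\varphi_2)=\langle(z-z_0)R,\chi\rangle=\langle zR,\chi\rangle-z_0R(\chi)$. Subtracting, the terms $z_0R(\chi)$ cancel and $AR(\chi)=\langle zR,\chi\rangle-\big(\int\chi\,\d^2z\big)\1=\langle zR-\1,\chi\rangle$; since $\chi$ is arbitrary, $AR(z)=-\1+zR(z)$. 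The mirror computation — mollifier in the second slot, $\varphi_1=(z-z_0)\chi$ in the first, right-multiplication by $z_0-A$ using $R(z_0)(z_0-A)=\1$ — gives $R(z)A=-\1+zR(z)$ verbatim.

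The one point requiring care is the passage to the limit $\varepsilon\downarrow 0$ in the topology of $\mathcal{D}(\mathbb{C})$: one must check that near $\mathrm{supp}\,\chi$ the functions $\r*\varphi_1^{(\varepsilon)}$ and $\varphi_2\,(\r*\varphi_1^{(\varepsilon)})$ converge with all derivatives, uniformly and with a fixed common compact support — which is exactly what $z_0\notin\mathrm{supp}\,\chi$ buys, since on a neighbourhood of $\mathrm{supp}\,\chi$ these functions are built from $1/(z-z_0)$, smooth there — and that multiplying a norm-convergent sequence in $L(V)$ by a fixed element converges in norm. The remaining ingredients are routine: $R|_{\rho(A)}$ is the genuine resolvent, $(z_0-A)R(z_0)=R(z_0)(z_0-A)=\1$, $A$ commutes with the pairing of a distribution against a test function, and multiplication of $R$ by the smooth function $z\mapsto z$ commutes both with $\dc$ and with that pairing. (In the same vein one may instead observe $R=\r*M$, whence $\dc(R-\r*M)=0$ and both sides vanish at infinity, so $R=\r*M$, and expand at infinity to get $M(z^n)=A^n$, in particular $\int M(z)\,\d^2z=\1$ and $AM(z)=M(z)A=zM(z)$, which recovers the completeness and eigenprojector identities used informally before the proposition — but none of this is needed for the argument above.)
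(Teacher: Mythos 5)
Your proof is correct, but it takes a genuinely different route from the paper's. Both arguments start by specializing one slot of the distribution resolvent equation to a point $\zeta=z_0$ of the resolvent set, where the distribution coincides with the honest resolvent $(z-A)^{-1}$; but from there the paper expands at infinity, writing $R(\zeta)=\sum_n A^n\zeta^{-n-1}$ (Neumann series, valid for $|\zeta|>\|A\|$) and $1/(\zeta-z)=\sum_n z^n\zeta^{-n-1}$, and reads off the identity by comparing the coefficients of $\zeta^{-2}$, whereas you stay at a finite $z_0$, feed in the tailored test function $\varphi_2=(z-z_0)\chi$ so that the Cauchy integral $\int\varphi_2(z_2)/(z_2-z_0)\,\d^2z_2$ collapses to $\int\chi$, and then cancel $R(z_0)$ by multiplying with $z_0-A$. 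Two remarks on what each buys. The paper's expansion is shorter and, by comparing higher powers of $1/\zeta$, would yield the whole family of moment identities $A^nR(\varphi)=\int\varphi(z)\bigl(z^nR(z)-z^{n-1}-z^{n-2}A-\cdots-A^{n-1}\bigr)\d^2z$ at one stroke; but it uses boundedness of $A$ essentially (convergence of the Neumann series), and the crucial first step --- substituting the point $\zeta$ into the distributional equation --- is left unjustified there. Your mollifier argument supplies exactly that missing justification (the limits in $\mathcal{D}$ and in operator norm are checked where they need to be, using that $z_0$ is at positive distance from $\mathrm{supp}\,\chi$ and that $R$ is continuous near $z_0$), and it uses boundedness of $A$ only to guarantee a resolvent point $z_0$ outside $\mathrm{supp}\,\chi$ and the identities $(z_0-A)R(z_0)=R(z_0)(z_0-A)=1$; with minor care about domains it would therefore extend to closed operators whose resolvent set meets the complement of every compact set. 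The algebra at the end ($R(\varphi_2)=(\int\chi)\,1+(A-z_0)R(\chi)$ versus $R(\varphi_2)=R(z\chi)-z_0R(\chi)$, cancellation of $z_0R(\chi)$) and the mirror argument for $R(z)A$ are both sound.
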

\begin{pf}
 If $\zeta$ is in the resolvent set of $A$, then resolvent distribution equation yields
 $$ R(\zeta)R(\varphi)= \int \ d ^2 z \frac{1}{\zeta-z}(R(z)-R(\zeta) ) \varphi(z)$$
 Expand in powers of $1/\zeta$ for $\zeta$ sufficiently big
 $$ (1/\zeta +A/ \zeta^2 +\cdots)R(z)	= \int \d ^2 z (1/\zeta +z/ \zeta^2 +\cdots)
 (R(z)- (1/\zeta +A/ \zeta^2 +\cdots)\varphi(z)$$
 and compair the powers of $1/\zeta^2$.
 \end{pf}
 The following theorem shows, that the spectral distribution is complete if it belongs to a bounded operator.
    \begin{thm}
   If $R$ is the resolvent of a bounded operator and  can be extended to a a resolvent distribution defined
   on the whole complex plane and called $R$ again , then the support of $M$ is compact  and $M(1)$ is defined , where here $1$ is the constant function $1$,
   and
   $$M(1)=\int \d^2 z M(z) =1_{L(V)}$$
   In this case we call $M$ {\em complete}.
   \end{thm}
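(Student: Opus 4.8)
The plan is to establish first that $\operatorname{supp}M$ is compact, which is what makes $M(1)$ meaningful, then to reduce the computation of $M(1)$ to an ordinary $L(V)$-valued integral of the classical resolvent, and finally to evaluate that integral by the Gauss formula of Lemma~1.6 together with the contour integral of the resolvent.

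First I would record that, $A$ being bounded, its spectrum $\sigma(A)$ lies in the disc $\{|z|\le\|A\|\}$, and that on the open set $\rho(A)$ the resolvent distribution $R$ coincides with the holomorphic function $z\mapsto(z-A)^{-1}$: by hypothesis $R$ extends the classical resolvent, which is continuous on $\rho(A)$, so the discussion following Definition~1.7 applies. Hence $\dc R$ vanishes on $\rho(A)$ and $\operatorname{supp}M=\operatorname{supp}\dc R\subset\sigma(A)$ is compact. A distribution of compact support extends uniquely to a continuous linear form on $C^\infty(\mathbb C)$: for any $\chi\in C_c^\infty(\mathbb C)$ with $\chi\equiv1$ on a neighbourhood of $\sigma(A)$ one puts $M(1):=M(\chi)$, independently of $\chi$. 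So $M(1)$ is well defined, and $M(1)=\int\d^2z\,M(z)$ is just this element of $L(V)$.

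Next I would compute. Choose $\chi$ with $\chi\equiv1$ on $\{|z|\le s\}$ and $\operatorname{supp}\chi\subset\{|z|<t\}$, where $\|A\|<s<t$. Then $\dc\chi$ is supported in the annulus $\{s\le|z|\le t\}\subset\rho(A)$, where $R$ is classical, so
$$M(1)=M(\chi)=\tfrac1\pi\,\dc R(\chi)=-\tfrac1\pi\,R(\dc\chi)=-\tfrac1\pi\int_{\mathbb C}(z-A)^{-1}\,\dc\chi(z)\,\d^2z .$$
To evaluate the last integral, set $h(z)=(z-A)^{-1}\bigl(1-\chi(z)\bigr)$. Since $1-\chi$ vanishes on a neighbourhood of $\sigma(A)$ and $(z-A)^{-1}$ is smooth on $\rho(A)$, the function $h$ extends to an element of $C^\infty(\mathbb C)$; moreover $\dc h=-(z-A)^{-1}\dc\chi$ has support in $\operatorname{supp}\dc\chi\subset\{|z|<t\}$, and $h=(z-A)^{-1}$ on $\{|z|\ge t\}$. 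Applying Lemma~1.6 on the disc $\{|z|<r\}$ with $r\ge t$ (so the bounding circle lies in $\{|z|\ge t\}$ and has $|z|=r>\|A\|$ there) gives
$$-\int_{\mathbb C}(z-A)^{-1}\dc\chi\,\d^2z=\int_{\mathbb C}\dc h\,\d^2z=-\tfrac\i2\oint_{|z|=r}(z-A)^{-1}\,\d z=-\tfrac\i2\cdot2\pi\i\cdot1_{L(V)}=\pi\cdot1_{L(V)} ,$$
where $\oint_{|z|=r}(z-A)^{-1}\d z=2\pi\i\cdot1_{L(V)}$ follows from the Neumann series $(z-A)^{-1}=\sum_{n\ge0}z^{-n-1}A^n$ (only the $n=0$ term survives $\oint_{|z|=r}z^{-n-1}\d z$). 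Therefore $M(1)=1_{L(V)}$.

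The argument is mostly bookkeeping. The point needing a word of care is the passage from the distribution $R$ to the classical resolvent in the middle step, which is legitimate precisely because $\operatorname{supp}\dc\chi\subset\rho(A)$ and there $R$ is continuous, hence equal to $(z-A)^{-1}$; one should also note that $h$ is genuinely $C^\infty$ across the circle $\{|z|=\|A\|\}$, being smooth on $\rho(A)$ and identically $0$ near $\sigma(A)$. An alternative to the Gauss step is to expand $(z-A)^{-1}$ in its Neumann series on $\operatorname{supp}\dc\chi$ and integrate term by term against $\dc\chi$, using $\dc(\mathcal P/z^{n+1})=\tfrac{(-1)^n\pi}{n!}\partial^n\delta$ from Proposition~1.8: since $\chi\equiv1$ near $0$, every derivative of $\chi$ at $0$ vanishes except $\chi(0)=1$, so again only $n=0$ contributes, giving $-\pi\cdot1_{L(V)}$. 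I do not anticipate a genuine obstacle; the only real care is in handling the distribution/function interface cleanly.
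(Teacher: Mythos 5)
Your proof is correct and follows essentially the same route as the paper's: cut off with a test function equal to $1$ near the spectrum, push $\dc$ onto the cutoff so that the integral lives in the resolvent set where $R$ is the classical holomorphic resolvent, convert to the contour integral $\frac{1}{2\pi\i}\oint_{|z|=r}(z-A)^{-1}\,\d z$ via the Gauss formula of Lemma~1.6, and evaluate it as $2\pi\i\cdot 1_{L(V)}$ by the Neumann series (residue at infinity). Your auxiliary function $h=(z-A)^{-1}(1-\chi)$ is only a cosmetic repackaging of the paper's application of Gauss's theorem to the region $\{|z|>r\}$, and your explicit justification that $\operatorname{supp}M\subset\sigma(A)$ is compact makes precise a point the paper merely assumes.
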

   \begin{pf}Assume, that the support of $M$ is contained in the circle of radius $r$. Assume a test 
    function $\varphi$	constant $1$ on this
   circle, then 
   \begin{multline*}
   $$ M(1)=M(\varphi)= -\frac{1}{\pi}\int  \d^2 z R(z) \dc \varphi(z)= -\frac{1}{\pi}\int _{|z|>r} \d^2 z R(z) \dc \varphi(z)=\\
    -\frac{1}{\pi}\int_{|z|>r} \d^2 z \dc( R(z)  \varphi(z))=
   \frac{1}{2\pi\i}\int_\Gamma \d z R(z)= 1_{L(V)},\end{multline*}
   where $\Gamma$ is the circle of radius $r$ run in the anti-clockwise sense.
    Now the residuum  in infinity of $R(z)=1/(z-A)$ equals $1_{L(V)}$.
   \end{pf}

\begin{thm}
Assume an open set $G \subset \mathbb{C}$ and  and a closed set $G_0 \subset G $ of Lebesgue measure 0.
Assume a  resolvent function on $G \setminus G_0$ and a sequence of closed set $ G \supset  G_n \downarrow G_0$. 
Assume that for $\varphi \in \mathcal{D}(G)$
$$\lim_{n \to \infty} \int_{G\ \setminus G_n} \d ^2 z \varphi(z) R(z) = R(\varphi)$$ 
exists. then the distribution $R(\varphi)$ fulfills the resolvent distribution equation.
\end{thm}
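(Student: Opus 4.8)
The plan is to restrict $R$ to the open set $D:=G\setminus G_0$, where it is an honest resolvent function and hence holomorphic, and to pass to the limit in the identity obtained by testing the ordinary resolvent equation over $D_n:=G\setminus G_n$. Write $R_n(\varphi):=\int_{D_n}\varphi(z)R(z)\,\d^2z$; these integrals make sense since $R$ is continuous, hence locally bounded, on $D\supseteq D_n$, and by hypothesis $R_n(\varphi)\to R(\varphi)$ for every $\varphi\in\mathcal D(G)$. As the space of distributions is sequentially complete, $R$ is a distribution and $R_n\to R$ uniformly on bounded subsets of $\mathcal D(G)$ (the fact recalled above). Note also that the open sets $D_n$ increase to $D$ and exhaust each of its compact subsets.

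I would first rewrite the assertion in the equivalent form (1.9), namely $R(\varphi_1)R(\varphi_2)=-R\big((\r*\varphi_1)\varphi_2+\varphi_1(\r*\varphi_2)\big)$, which makes sense because $\r*\varphi_i\in C^\infty$, so that $(\r*\varphi_1)\varphi_2+\varphi_1(\r*\varphi_2)\in\mathcal D(G)$. The left side equals $\lim_n R_n(\varphi_1)R_n(\varphi_2)$ by continuity of multiplication, and
$$R_n(\varphi_1)R_n(\varphi_2)=\iint_{D_n\times D_n}\varphi_1(z_1)\varphi_2(z_2)\,R(z_1)R(z_2)\,\d^2z_1\,\d^2z_2 .$$
Off the (Lebesgue-null) diagonal of $D\times D$ the resolvent equation gives $R(z_1)R(z_2)=(z_2-z_1)^{-1}(R(z_1)-R(z_2))$; inserting this, splitting into two terms and carrying out the inner integration over $D_n$ yields, for every $n$, the exact identity
$$R_n(\varphi_1)R_n(\varphi_2)=-R_n\big((\r*\varphi_1)\varphi_2+\varphi_1(\r*\varphi_2)\big)-E_n ,\qquad E_n:=-\int_{D_n}\big(\varphi_1\,g_{2,n}+\varphi_2\,g_{1,n}\big)R\,\d^2z ,$$
where $g_{i,n}:=\r*(\1_{G_n}\varphi_i)$ is the portion of $\r*\varphi_i$ produced by integration over $G_n$. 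Since $(\r*\varphi_1)\varphi_2+\varphi_1(\r*\varphi_2)$ is a test function, the first term on the right converges to $-R\big((\r*\varphi_1)\varphi_2+\varphi_1(\r*\varphi_2)\big)$, so the whole theorem comes down to the single statement $E_n\to0$.

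To treat $E_n$ I would use two properties of $g_{i,n}$. First, $\|g_{i,n}\|_\infty\to0$: indeed $|g_{i,n}(w)|\le\|\varphi_i\|_\infty\int_{G_n\cap\operatorname{supp}\varphi_i}|w-z|^{-1}\,\d^2z\le 2\|\varphi_i\|_\infty\sqrt{\pi\,m_{i,n}}$, where $m_{i,n}:=\operatorname{meas}(G_n\cap\operatorname{supp}\varphi_i)\downarrow\operatorname{meas}(G_0\cap\operatorname{supp}\varphi_i)=0$. Second, $\dc g_{i,n}=\pi\1_{G_n}\varphi_i$, so $g_{i,n}$ is holomorphic on $\mathbb C\setminus G_n$, which contains $D_n$, the region on which $R$ is holomorphic. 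Writing $\varphi_i=\dc\Phi_i$ with $\Phi_i:=\pi^{-1}\r*\varphi_i$ (smooth), one gets $\dc(\Phi_1 g_{2,n}R)=\varphi_1 g_{2,n}R$ on $D_n$; since the right side vanishes off $\operatorname{supp}\varphi_1$, Gauss's theorem (Lemma 1.4), applied on $D_n\cap B$ for a fixed disc $B\supseteq\operatorname{supp}\varphi_1$ with $\partial B$ avoiding $G_0$, turns $\int_{D_n}\varphi_1 g_{2,n}R\,\d^2z$ into a contour integral over $\partial B$ minus one over $\partial G_n$, and likewise for the $\varphi_2 g_{1,n}$ term. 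The $\partial B$ contributions vanish in the limit, since there $R$ and $\Phi_i$ stay bounded while $\|g_{i,n}\|_\infty\to0$.

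The main obstacle is the contour integral over $\partial G_n$. The crude estimate $\|g_{i,n}\|_\infty\cdot\operatorname{length}(\partial G_n)\cdot\sup_{\partial G_n}\|R\|$ is insufficient: when $R$ is not locally integrable near $G_0$ — exactly the case of interest, e.g.\ an operator with nontrivial Jordan structure as treated later in the paper — this product need not tend to $0$, and in examples $E_n\to0$ only through genuine cancellation. What must be used is the holomorphy of $R$: over the part of a neighbourhood of $G_0$ where $\varphi_i\equiv 0$ one deforms $\partial G_n$ to a fixed contour encircling $G_0$, reducing that piece to bounded residue contributions of $R$ paired against the vanishing factors $g_{i,n}$; the remaining piece, where $\varphi_i$ does not vanish arbitrarily near $G_0$, is estimated by combining the $L^1$-smallness of $\1_{G_n}\varphi_i$ with the Laurent structure of $R$ across the thin shell of $D_n$ adjacent to $\partial G_n$. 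This is where the measure-zero hypothesis on $G_0$ and the standing assumption that the improper integrals $R(\varphi)$ converge enter essentially; the surrounding manipulations are routine.
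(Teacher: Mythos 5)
Your reduction to the identity $R(\varphi_1)R(\varphi_2)=-R\big((\r*\varphi_1)\varphi_2+\varphi_1(\r*\varphi_2)\big)$ and your isolation of the main term are essentially the paper's opening moves, but the proof breaks exactly where you flag it: the claim $E_n\to0$ is a genuine gap, and it is the whole theorem. The mechanism you propose for it is not available under the hypotheses. You treat $G_0$ as a set of isolated singularities around which $R$ has a Laurent expansion and around which $\partial G_n$ can be deformed to a fixed encircling contour; but $G_0$ is only assumed to be a closed Lebesgue-null set (in the paper's main application it is a union of real intervals carrying a continuous spectrum), so there is no residue calculus, no ``Laurent structure across the thin shell,'' and no fixed contour to deform to. There is also a structural obstacle to your error term itself: since $\dc g_{i,n}=\pi\1_{G_n}\varphi_i$ is discontinuous, $g_{i,n}=\r*(\1_{G_n}\varphi_i)$ is only H\"older continuous, so $\varphi_j g_{i,n}\notin\mathcal{D}(G)$ and neither the hypothesis (which controls $\int_{G\setminus G_n}\psi R$ only for $\psi\in\mathcal{D}(G)$) nor the limit distribution $R$ can legitimately be applied to it. So the final paragraph of your argument, which you yourself describe as ``routine manipulations,'' is where the actual content would have to be, and it cannot be completed along the lines you sketch.

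The paper closes this gap with a soft compactness argument rather than an estimate. Keep the truncated inner integral intact, setting $\psi_n(z_1)=\varphi_1(z_1)\int_{G\setminus G_n}\varphi_2(z_2)/(z_1-z_2)\,\d^2z_2$, so that the first half of the double integral is exactly $\int_{G\setminus G_m}R\,\psi_n\,\d^2z_1$ with two independent truncation indices $m,n$. Because $G_0$ is Lebesgue-null, $\psi_n\to\psi$ (the untruncated convolution times $\varphi_1$) and $\{\psi,\psi_n\}$ is a bounded subset of $\mathcal{D}(G)$; the hypothesis says the distributions $R_m$ converge pointwise on $\mathcal{D}(G)$, hence (by the fact recalled in \S 1.2, i.e. Banach--Steinhaus for $\mathcal{D}'$) uniformly on that bounded set. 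Writing $R_m(\psi_n)-R(\psi)=(R_m-R)(\psi_n)+R(\psi_n-\psi)$ then yields the double limit with no hard bound on any remainder. Note that your $E_n$ is precisely $-R_n(\psi-\psi_n)$ plus its mirror term, so this uniform-convergence-on-bounded-sets fact is exactly the tool that must replace your contour argument; it is also the only place where the standing assumption that $R(\varphi)$ exists for \emph{every} test function is really used. (A residual smoothness issue for $\psi_n$, parallel to the one for your $g_{i,n}$, is present in the paper's proof as well, but the equicontinuity argument is the intended and essential engine.)
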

\begin{proof}
We have
$$ R(\varphi_1)R(\varphi_2) = \lim _{m,n \to \infty} \int_{G \setminus G_m}\d^2 z_1  \int _{G\setminus G_n} \d^2 z_2
R(z_1)R(z_2)\varphi_1(z_1) \varphi_2(z_2)$$
Using the resolvent equation for functions we have to consider

$$ -\int_{G \setminus G_m}\d^2 z_1  \int _{G\setminus G_n} \d^2 z_2(\frac{R(z_1)}{z_2-z_1}+\frac{ R(z_2}{z_1-z_2})
\varphi_1(z_1)\varphi(z_2)
= I+II$$
with
\begin{multline*} I = - \int_{G \setminus G_m}\d^2 z_1  \int _{G\setminus G_n} \d^2 z_2\frac{ R(z_1)}{z_2-z_1}
\varphi_1(z_1)\varphi_2(z_2) \\=
\int_{G \setminus G_m}\d^2 z_1  R(z_1) \psi_n(z_1)\end{multline*}
and
$$ \psi_n(z_1)= \varphi_1(z_1)\ \int _{G\setminus G_n} \d^2 z_2\varphi(z_2) /(z_1-z_2)$$
The support of $\psi_n$ is contained in the support of $ \varphi_1$. The derivatives of $\psi_n$ are linear cmbinations of the products of the derivatives of $\varphi_1$ and derivatives of the convolution 
$\ \int _{G\setminus G_n} \d^2 z_2\varphi(z_2) /(z_1-z_2)$, which are the convolutions of the derivatives s of $\varphi_2$. Hence $ \psi_n \in \mathcal{D}(G)$ and as $G_0$ is a Lebesgue
null set, 
$$ \psi_n(z) \to \psi(z) = \varphi_1(z_1)\ \int _G \d^2 z_2\varphi(z_2) /(z_1-z_2)$$
in the topology of $\mathcal{D}(G)$. The set $\{\psi,\psi_n, n=1,2,\cdots\}$ is a bounded set in  $\mathcal{D}(G)$
and the convergence 
$$
\int_{G \setminus G_m}\d^2 z_1  R(z_1) \psi_n(z_1)\to 
\int_G\d^2 z_1  R(z_1) \psi_n(z_1)$$
is uniform on this bonded set, i.e. uniform in $n$..From there one deduces that for $m,n \to \infty$
\begin{multline*}I = \int_{G \setminus G_m}\d^2 z_1  R(z_1) \psi_n(z_1)\\\to R(\psi)=- \int_{G }\d^2 z_1  \int _{G} \d^2 z_2R(z_1)/(z_2-z_1)\varphi_1(z_1)\varphi_2(z_2)\end{multline*}
A similar assertion holds for $II$.
\end{proof}

\subsection{ Singularities of the resolvent function in a point }
 \begin{prop}
  If the resolvent function $R(z)$  has a  pole of order $n$  in a point $z_0$, then  there exists  an open neighborhood $U$ of $z_0$ such that for $z\in U, z \ne z_0$the function  $R(z)$ is holomorphic  and can be expanded in a Laurent series
   $$ R(z)= b_0( z-z_0)^{-1} + \cdots b_{n-1} (z-z_0)^{-n} + R_0(z),$$
   where $R_0$ is a power series in $( z-z_0)$ with exponents $ \ge 0$.We have $b_0=p$ and $b_j= a^jj \ge 1$, where $p^2=p$ is idempotent and
 $a^n=0$ and $ap=pa$. Define
 $$ R(z) = \sum_{k=0}^{n-1} b_k\frac{\mathcal{P}}{(z-z_0)^{k+1}}+R_0(z),$$
 where the distribution \cite{Schwartz1}
 $$ \int \d^2 z \frac{\mathcal{P}}{z^k}\varphi(z)=\lim_{\varepsilon \downarrow 0} \int_{|z|> \varepsilon} \frac{1}{z^k}\varphi(z)\d^2 z
 = (-1)^k \int \frac{1}{z}\partial ^k \varphi(z)\d^2 z.$$
 The distribution $R(z)$ fulfills the resolvent distribution equation. The spectral distribution is
 $$ M(z)=  \sum_{k=0}^{n-1} \frac{(-1)^k}{k!} p a^k \partial^k \delta(z-z_0)$$
 with $pa^0=p$.
 \end{prop}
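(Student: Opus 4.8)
The plan is to assemble the statement from three ingredients already available: the classical algebra of a pole of a pseudo-resolvent, the extension theorem across a Lebesgue-null set proved in the previous subsection, and formula (1.7) for the complex conjugate derivative of the principal-value kernels.

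\emph{Step 1: Laurent expansion and the algebraic relations.} Since $R(z)$ satisfies the resolvent equation in a neighborhood of $z_0$ it is holomorphic there away from $z_0$, so on a punctured disc $U\setminus\{z_0\}$ it has a Laurent expansion, and "pole of order $n$" means this expansion has the asserted form $R(z)=\sum_{k=0}^{n-1}b_k(z-z_0)^{-k-1}+R_0(z)$ with $b_{n-1}\neq0$ and $R_0$ holomorphic in $U$. Put $p:=b_0=\frac{1}{2\pi\i}\oint_\Gamma R$ and $a:=b_1$. The relations $p^2=p$, $pa=ap=a$, $b_k=a^k$ and $a^n=0$ are the standard structure of a pole of a pseudo-resolvent \cite{Hille}, which I would either quote or establish directly: e.g.\ $p^2=p$ follows by integrating $R(z_1)R(z_2)=\big(R(z_1)-R(z_2)\big)/(z_2-z_1)$ over two concentric circles $\Gamma_1\subset\Gamma_2$ about $z_0$, where the left side integrates to $p\cdot p$ and on the right only the summand $R(z_1)/(z_2-z_1)$ contributes (giving $p$), the other having no pole inside $\Gamma_1$; the identities $b_k=a^k$ and $pa=a$ come from the same residue bookkeeping, or from inserting the series into $R'(z)=-R(z)^2$.

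\emph{Step 2: existence of the distribution and the resolvent distribution equation.} For $\varphi\in\mathcal D(U)$ and $\varepsilon>0$ one has
$$\int_{|z-z_0|>\varepsilon}R(z)\varphi(z)\,\d^2z=\sum_{k=0}^{n-1}b_k\!\!\int_{|z-z_0|>\varepsilon}\!\!\frac{\varphi(z)}{(z-z_0)^{k+1}}\,\d^2z+\int_{|z-z_0|>\varepsilon}\!\!R_0(z)\varphi(z)\,\d^2z .$$
As $\varepsilon\downarrow0$ each term of the first sum converges to $\frac{\mathcal P}{(z-z_0)^{k+1}}(\varphi)$ by the very definition of that distribution (for $k=0$ the kernel is already locally integrable), and the last integral converges to $\int R_0\varphi$ because $R_0$ is continuous and the excised disc has area tending to $0$. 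Hence the limit exists and equals $R(\varphi)$ for the distribution $R(z)=\sum_k b_k\,\mathcal P/(z-z_0)^{k+1}+R_0(z)$ of the statement. Since $R$ is a resolvent function on $U\setminus\{z_0\}$, the complement in $U$ of the closed null set $\{z_0\}$, and $G_n:=\overline{B(z_0,1/n)}\downarrow\{z_0\}$, the theorem on extension of resolvent functions across closed null sets from the previous subsection applies verbatim and shows that $R(\varphi)$ satisfies the distribution resolvent equation.

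\emph{Step 3: the spectral distribution.} By definition $M(z)=(1/\pi)\dc R(z)$. The holomorphic part contributes nothing, $\dc R_0=0$, while translating (1.7) to the point $z_0$ gives $\dc\big(\mathcal P/(z-z_0)^{k+1}\big)=\frac{(-1)^k\pi}{k!}\partial^k\delta(z-z_0)$, so
$$M(z)=\frac{1}{\pi}\sum_{k=0}^{n-1}b_k\,\frac{(-1)^k\pi}{k!}\,\partial^k\delta(z-z_0)=\sum_{k=0}^{n-1}\frac{(-1)^k}{k!}\,b_k\,\partial^k\delta(z-z_0),$$
and substituting $b_0=p$, $b_k=a^k=pa^k$ (using $pa=a$) yields the claimed formula with the convention $pa^0=p$. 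I expect the only step requiring genuine work to be the algebraic one of Step 1 — deducing $p^2=p$, $pa=ap=a$, $a^n=0$ from the resolvent equation — and it may simply be quoted from \cite{Hille}; the rest is bookkeeping together with the three cited ingredients.
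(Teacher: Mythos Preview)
Your proposal is correct and follows the paper's own route almost exactly: Theorem~1.9 for the resolvent distribution equation via the limit over shrinking discs, and formula~(1.7) for the spectral distribution.

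The one place where you and the paper diverge is Step~1. You suggest either quoting \cite{Hille} or running a double-contour residue argument (equivalently, reading off relations from $R'=-R^2$). The paper instead shifts to $z_0=0$, rewrites the resolvent equation as
\[
R(z_1)R(z_2)=\frac{1}{z_1^{-1}-z_2^{-1}}\cdot\frac{R(z_1)-R(z_2)}{z_1 z_2}
\]
(modulo a typographical slip in the displayed identity), expands both sides in double Laurent series, and matches the coefficients of $z_1^{-k-1}z_2^{-l-1}$ to obtain directly
\[
b_k b_l=\begin{cases} b_{k+l}&k+l\le n-1,\\ 0&k+l>n-1,\end{cases}
\]
from which $p^2=p$, $b_k=a^k$, $a^n=0$, $ap=pa$ drop out immediately. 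This is a slightly slicker self-contained derivation than your contour bookkeeping, but the content is identical and either would serve.
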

 \begin{pf}
 We write the resolvent equation in the form
 $$ \frac{R(z_1)}{z_1}\frac{R(z_2)}{z_2} =\frac{1}{z_1^{-1}-z_2^{-1}}(R(z_1)-R(z_2))$$ $$
 = \sum_{m=0}^{n-1} b_m (\sum_{k+l=m}z_1^{-k+1}z_2^{-l+1}) +\frac{z_1z_2}{z_2-z_1}(R_0(z_1)-R_0(z_2))$$
 and compare the coefficients of $z_1^{-k}z_2^{-l}$.  Then
 $$ b_k b_l = \begin{cases}
b_{k+l} & \text{ for } k+l \le n-1 \\
0 & \text{ for } k+l  > n-1 
\end{cases}.$$

From there follows directly the  assertion of the proposition.
Using theorem 1.9, we see that $R(z)$ fulfills the resolvent distribution equation. That $M(z)$ is multiplicative
 follows directly from Leibniz's formula. 
 \end{pf}

\begin{prop} 
 If the resolvent function $R(z)$ has two isolated poles of finite order $w_1,w_2$, then there exist two open neighborhoods 
 $U_1,U_2$ of $w_1$ and $w_2$, such that $R(z)$ is holomorphic in $U_1 \cup U_2$ outside the points $w_1,w_2$ and 
 $$ R(z) = \sum_k b_k (z-w_1)^{-k} + O(1)$$
 $$ R(z) = \sum_l c_k (z-w_2)^{-l} + O(1)$$ 
 near $w_1$, resp $w_2$. Then
$$ b_kc_l = 0 $$ 
\end{prop}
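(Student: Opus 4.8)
\noindent The statement asserts that the principal-part coefficients of $R$ at two distinct poles annihilate one another; in the notation of Proposition~1.12 it contains the orthogonality $p_1p_2=0$ of the two spectral projectors together with the analogous relations for the nilpotent parts, and it is a purely classical fact about Banach-algebra-valued meromorphic functions, so no distributional machinery will be needed. The plan is to represent each coefficient as a contour integral of $R$ and then insert the resolvent equation into the resulting double integral.

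First I would shrink $U_1$ and $U_2$ so that their closures are disjoint, and fix small positively oriented circles $\gamma_j=\{\,|z-w_j|=\varepsilon_j\,\}$ whose closed discs $\overline{D_j}$ satisfy $\overline{D_j}\subset U_j$. Since $\gamma_1$ and $\gamma_2$ avoid $w_1,w_2$ they lie in the resolvent set, so $R$ is a norm-continuous $B$-valued function on each of them and (1.4) holds there. Expanding $R$ in its Laurent series at $w_j$, using that $\frac1{2\pi\i}\oint_{\gamma_1}(z-w_1)^{k-1-j}\,\d z$ equals $1$ when $j=k$ and $0$ otherwise, and noting that the holomorphic remainder $O(1)$ integrates to $0$ against $(z-w_1)^{k-1}$ for $k\ge 1$, I obtain
$$ b_k=\frac1{2\pi\i}\oint_{\gamma_1}(z-w_1)^{k-1}R(z)\,\d z,\qquad c_l=\frac1{2\pi\i}\oint_{\gamma_2}(z-w_2)^{l-1}R(z)\,\d z . $$

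Next I would multiply these. Both integrands being norm-continuous on compact contours, Fubini applies to the $B$-valued iterated integral and
$$ b_kc_l=\frac1{(2\pi\i)^2}\oint_{\gamma_1}\!\oint_{\gamma_2}(z_1-w_1)^{k-1}(z_2-w_2)^{l-1}\,R(z_1)R(z_2)\,\d z_2\,\d z_1 . $$
For $z_1\in\gamma_1$ and $z_2\in\gamma_2$ one has $z_1\ne z_2$, so the resolvent equation (1.4) gives $R(z_1)R(z_2)=R(z_1)/(z_2-z_1)-R(z_2)/(z_2-z_1)$, splitting $b_kc_l$ into two terms. In the first I would carry out the $z_2$-integration first: for fixed $z_1\in\gamma_1$ the map $z_2\mapsto (z_2-w_2)^{l-1}/(z_2-z_1)$ is holomorphic on $\overline{D_2}$, because its only possible singularity $z_2=z_1$ lies in $U_1$, hence outside $D_2$, and because $(z_2-w_2)^{l-1}$ is a polynomial for $l\ge 1$; by Cauchy's theorem the inner integral vanishes, so the first term is $0$. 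By the symmetric argument, performing the $z_1$-integration first in the second term and using that $z_1=z_2$ lies outside $D_1$ while $(z_1-w_1)^{k-1}$ is a polynomial for $k\ge 1$, the second term is $0$ as well. Hence $b_kc_l=0$.

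The only slightly delicate points are the appeals to Fubini and to Cauchy's theorem for Banach-algebra-valued integrands; both reduce at once to the scalar statements by composing with an arbitrary continuous linear functional on $B$, so I do not expect any real obstacle beyond keeping track of which circle encloses which pole and of the orientations.
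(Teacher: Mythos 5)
Your proof is correct, but it takes a genuinely different route from the paper's. The paper stays inside the distributional framework it has been building: it uses Proposition~1.10 to extend $R$ to a resolvent distribution separately on $U_1$ and on $U_2$, glues the two pieces (recollement) to a distribution on $U_1\cup U_2$ satisfying the distribution resolvent equation, and then invokes the multiplicativity of the spectral distribution $M$ from Theorem~1.6: choosing test functions $\varphi_1,\varphi_2$ with disjoint supports in $U_1$, $U_2$ gives $M(\varphi_1)M(\varphi_2)=M(\varphi_1\varphi_2)=0$, while $M(\varphi_j)$ is computed to be $\sum_k (1/k!)\,b_k\,\partial^k\varphi_1(w_1)$ resp.\ $\sum_l (1/l!)\,c_l\,\partial^l\varphi_2(w_2)$; since the derivatives at $w_1,w_2$ can be prescribed arbitrarily, $b_kc_l=0$ follows. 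You instead bypass all of that by writing $b_k$ and $c_l$ as Cauchy contour integrals over disjoint circles, multiplying, inserting the functional resolvent equation (1.3)/(1.4) on the product contour, and killing each of the two resulting terms by Cauchy's theorem because the only candidate singularity of the inner integrand lies outside the relevant disc and $(z-w_j)^{k-1}$ is a polynomial for $k\ge 1$. Your argument is more elementary and self-contained --- it needs only the resolvent equation for functions on the punctured neighborhoods, not Proposition~1.10, the gluing step, or Theorem~1.6/1.9 --- and the reductions of Fubini and Cauchy to the scalar case via continuous linear functionals are standard. What the paper's route buys is that the same computation of $M(\varphi_j)$ in terms of the Laurent coefficients is reused elsewhere, and the vanishing $b_kc_l=0$ appears as a special case of the general orthogonality relation $M(z_1)M(z_2)=\delta(z_1-z_2)M(z_1)$ rather than as an isolated contour-integral identity.
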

\begin{pf}
With the help of prposition 1.10 we can define a distribution on $U_1$ and second one on $U_2$. 
By  gluing together \cite{Schwartz1}p.26 these distributions can be extended to a distribution on $U_1 \cup U_2$ and by theorem 1.9 it fulfills the distribution resolvent equation extending $R$ and called $R$ again. Choose two
test functions $\varphi_1,\varphi_2$ with support in $U_1$ resp. $U_2$, with disjoint supports, then
$$M(\varphi_1)= \sum_k (1/k!)b_k \partial^k \varphi_1 (w_1) $$
$$M(\varphi_2)= \sum_l (1/l!)c_l \partial^l \varphi _2(w_2) $$
 $$0= M(\varphi_1\varphi_2)=   \sum_{k,l} (1/k!)b_k \partial^k \varphi_1 (w_1)(1/l!)c_l \partial^l \varphi _2(w_2)$$
 As $\partial^k \varphi_1 (w_1),\partial^l \varphi _2(w_2)$ can be chosen arbitrarily, the equation $ b_kc_l = 0$
 follows.
\end{pf}

\subsection{ Singularities of the resolvent function  on the real line}

       For the following discussion we  want to extend Dirac's bra and ket notation to functionals. Let $V$ be a  pre-hilbert space, i.e. complex vector space with strictly positive definite 
  scalar product
$ f,g \mapsto \langle f | g \rangle .$
  The scalar product is linear in the second factor and semi linear or conjugate linear in the first one.  
  We denote by $f = |f \rangle $ (ket vector) the elements of $V$ and by $\langle f|$ (bra vector)  the linear functional
   $ g \mapsto \langle f | g \rangle $. In the same spirit  $|f \rangle$ can be considered as semi linear (= conjugate complex linear) functional on $V$. If $\alpha$ is a linear functional on $V$, then we extend the notation and write
   $\alpha(f)= \langle \alpha| f \rangle$  and $\alpha = \langle \alpha |$		 and if $\alpha$ is a semi linear functional we write
   $\alpha(f)= \langle f | \alpha \rangle$ and $ \alpha = | \alpha \rangle $. We denote by $V^\+$ the space of all semi linear  functionals.  We embed $ V\hookrightarrow V^\+$.  If $V$ is a Hilbert space, then all semi linear functional are of the form $|\alpha \rangle = |f \rangle, f\in V$ and we can identify
   $V= V^\+$.

  We investigate now a resolvent functions with singularities on the real line. 
  Assume an open subset $G \subset \mathbb{R}$	and an open set  $G_1$ with $ G \subset \overline{G}\subset G_1$ and  an open interval $I$  containing $ 0$
,   and a resolvent function $R(z)$
 holomorphic in $(G_1\times I)\setminus G $. Define for $u\in I,u \ne 0$ the functions  $R_u(x)=R(x+\i u)$ and
  $\varphi_u(x)=\varphi(x+ \i u)$ on $G_1$.  Assume, that for $u\to +0$ and $u\to -0$ the function $R_u$
converges in the sense of distributions to $R_{+0}$ resp. $R_{-0}$. 
  For $\varepsilon > 0,[-\varepsilon,\varepsilon] \subset I$ the set
 $ \{ \varphi_u, |u| \le \varepsilon \}$ is bounded in $\mathcal{D}(G_1)$ and hence the convergence $R_u(\varphi)\to R_{\pm 0} (\varphi)$ is uniform on this bounded set and $R_u(\varphi_u)\to R_{\pm 0} (\varphi_0)$. So
 $ u \mapsto R_u(\varphi_u)$ is integrable and defines
   the distribution  $R(\varphi)$ on $ G_1\times I$
 $$ R(\varphi)= \int \d u  R_u(\varphi_u) = \int \d u \int \d x R(x+ \i u) \varphi(x+\i u).$$
 The integral defines a {\em resolvent distribution on $G_1\times I$} by theorem 1.9.We have
 $$ M(x + \i y) =\frac{1}{2 \pi \i }( R(x - \i 0)-R(x + \i 0))\delta(y)= \mu(x) \delta(y).$$
 So
  $$ \mu(x)=\frac{1}{2 \pi \i }( R(x - \i 0)-R(x + \i 0))$$
  is a distribution on $G_1$ and
  as a consequence of theorem 1.6 we have that $\mu$ is multiplicative and	for $ \zeta \notin G$
 $$ M(\varphi) R(\zeta) = \int \d^2 z\varphi(z)M(z)/(\zeta-z)= \int \d x \varphi(x+ \i 0) \mu(x)/(\zeta-x).$$
    We consider now the case that the resolvent function $R(z)$ has the values in a Hilbert space of functions. For simplicity we assume  the that the Hilbert space is $\mathfrak{H}= L^2(G)$. In  other papers we treated Hilbert  spaces on subsets of          $\mathbb{R}^n$. \cite{vW}\cite{vW1}.

       Consider the sesqui linear form
       \begin{multline*}f_1,f_2 \in \mathcal{D}(G) \mapsto \langle f_1 |R(\varphi)| f_2 \rangle =\\
       \iiint \d \omega_1 \d \omega_2 \d^2 z \overline{f_1(\omega_1)} f_2(\omega_2)
       \varphi(z) R(z; \omega_1,\omega_2)\end{multline*}
       and assume that $R$ is a distribution in the three variables $\omega_1,\omega_2\in G,z\in G_1\times I$. 
       Then $\mu(x)$ is distribution in the three variables $\omega_1,\omega_2\in G,x\in  G_1$.

      We treat in the following propositiom  a case where the eigen spaces are one dimensional. In \cite{vW,vW1} you will find the case, that the eigen spaces are more dimensional. We will use this proposition for the last example.

        \begin{prop}
.
              Assume that
              there exist two 
             functions $\alpha_x,\alpha_x': x\in G_1\mapsto \mathcal{D}'(G)$, which are $C^\infty$and $\ne 0$ for $x\in G$ and vanish for $ x \notin G$. We assume for  $ f_1,f_2 \in \mathcal{D}(G)$ 
   $$  \langle f_1 | \mu(x)| f_2 \rangle= \langle f_1| \alpha_x \rangle \langle \alpha'_x | f_2 \rangle$$          
   Then for $\zeta \notin G$
      $$ \langle f_1| R(\zeta)| \alpha_x \rangle 
      = \frac{1}{\zeta - x}\langle f_1|  \alpha_x \rangle $$
      or 
    $$    R(\zeta)| \alpha_x \rangle 
      = \frac{1}{\zeta - x} |\alpha_x \rangle $$
      and $| \alpha_x \rangle $ is a right eigenvector of $R(\zeta)$ for the eigenvalue $1/(\zeta -x).$
 Similar
   $$   \langle \alpha_x' | R(\zeta)
      = \frac{1}{\zeta - x} \langle \alpha_x' | $$
 and $  \langle \alpha_x' |$ is a left eigenvector of $R(\zeta)$ for the eigenvalue $1/(\zeta -x).$
       Furthermore for $\varphi \in \mathcal{D}(G)$
      $$ \int  \d x\varphi(x)| \alpha_x \rangle \in L^2(G)$$  
        $$ \int  \d x\varphi(x)\langle \alpha_x |\in L^2(G)$$ 
        So we may  $| \alpha_x \rangle, \langle \alpha_x' |$ consider as generalized vectors in $\mathcal{D}(G)$.
We have the orthogonality relation  for $\varphi_1,\varphi_2 \in \mathcal{D}(G)$
      $$ \iint  \d x \d y\varphi_1(x) \varphi_2(y)\langle \alpha_x'| \alpha_y \rangle 
    = \int\d x \varphi_1(x)\varphi_2(x)$$ 
$$ \langle \alpha_x'| \alpha_y \rangle = \delta(x-y)$$
If $A$
 is a closed operator and  with  the resolvent distribution $R$ and $ AR(z)=R(z)A = -1+zR(z)$ then
$$ A | \alpha_x \rangle = x| \alpha_x \rangle $$
$$  \langle \alpha_x' |A = x \langle \alpha_x' |$$
So $| \alpha_x \rangle$ is a right eigen vector and $ \langle \alpha_x' |$ is a left eigen vector\\ for the eigen value $x$.
 \end{prop}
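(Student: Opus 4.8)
The plan is to derive all of the asserted identities from a single \emph{representation lemma}: for every $\varphi\in\mathcal{D}(G)$ there are finitely many $\psi_j,g_j\in\mathcal{D}(G)$ with $\sum_j\psi_j(x)\langle\alpha'_x|g_j\rangle=\varphi(x)$ for all $x$, and then
\[
\int\varphi(x)|\alpha_x\rangle\,\d x=\sum_j\mu(\psi_j)\,g_j\quad\text{in }\mathfrak{H},
\]
together with the symmetric statement $\int\varphi(x)\langle\alpha'_x|\,\d x=\sum_i\langle g_i|\mu(\theta_i)$ with $\sum_i\theta_i(x)\langle g_i|\alpha_x\rangle=\varphi(x)$. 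Here $\mu(\psi)$ denotes the bounded operator $M\bigl(\,z=x+\i y\mapsto\psi(x)\chi(y)\,\bigr)$ with $\chi$ a cut-off equal to $1$ near $0$, so that $M(\,\cdot\,)=\mu(\,\cdot\,)$ on such functions; recall $M$, hence $\mu$, is a distribution with values in the Banach algebra $L(\mathfrak{H})$. From Theorem 1.6 and the relation $M(\varphi)R(\zeta)=\int\varphi(z)M(z)/(\zeta-z)$ recorded above one then has the three operator identities $\mu(\psi_1)\mu(\psi_2)=\mu(\psi_1\psi_2)$, $\mu(\psi)R(\zeta)=\mu\bigl(\psi/(\zeta-\,\cdot\,)\bigr)$ for $\zeta\notin G$, and, in the presence of $A$ with $AR(z)=R(z)A=-1+zR(z)$, $A\mu(\psi)=\mu(\psi)A=\mu(x\psi)$ (because $AM(z)=zM(z)$). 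Granting the representation lemma, each assertion is obtained by feeding the relevant identity through the finite sum $\sum_j\mu(\psi_j)g_j$.

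For the representation lemma I would start from the hypothesis, which gives $\langle f_1|\mu(\varphi)|f_2\rangle=\int\varphi(x)\langle f_1|\alpha_x\rangle\langle\alpha'_x|f_2\rangle\,\d x$ for $f_1,f_2\in\mathcal{D}(G)$. Since $\mu(\varphi)$ is a bounded operator on $\mathfrak{H}$, the vector $\mu(\varphi)g$ ($g\in\mathcal{D}(G)$) genuinely lies in $\mathfrak{H}$ and represents the continuous functional $f_1\mapsto\int\varphi(x)\langle f_1|\alpha_x\rangle\langle\alpha'_x|g\rangle\,\d x$; that is, $\mu(\varphi)g=\int\varphi(x)\langle\alpha'_x|g\rangle|\alpha_x\rangle\,\d x$ in $\mathfrak{H}$. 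Because $\alpha'_{x_0}\neq0$ in $\mathcal{D}'(G)$ for each $x_0\in G$, one can choose $g\in\mathcal{D}(G)$ with the $C^\infty$ function $x\mapsto\langle\alpha'_x|g\rangle$ nonvanishing on a neighbourhood of $x_0$; covering the compact set $\operatorname{supp}\varphi\subset G$ by finitely many such neighbourhoods, choosing a subordinate partition of unity $\{\rho_j\}$ with matching $g_j$, and putting $\psi_j(x):=\varphi(x)\rho_j(x)/\langle\alpha'_x|g_j\rangle\in\mathcal{D}(G)$, we get $\sum_j\psi_j(x)\langle\alpha'_x|g_j\rangle=\varphi(x)$ and hence $\sum_j\mu(\psi_j)g_j=\int\bigl(\sum_j\psi_j(x)\langle\alpha'_x|g_j\rangle\bigr)|\alpha_x\rangle\,\d x=\int\varphi(x)|\alpha_x\rangle\,\d x$. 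In particular $\int\varphi(x)|\alpha_x\rangle\,\d x\in\mathfrak{H}$; the $\langle\alpha_x|$-version follows by conjugation, and the bra version follows symmetrically from $\langle g|\mu(\varphi)=\int\varphi(x)\langle g|\alpha_x\rangle\langle\alpha'_x|\,\d x$ using $\langle g|\alpha_{x_0}\rangle\neq0$ for suitable $g$. This already settles the $L^2$-membership claims.

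The eigenvector relations then fall out. Interpreting $R(\zeta)|\alpha_x\rangle$ as the $\mathfrak{H}$-valued distribution $\varphi\mapsto R(\zeta)\int\varphi(x)|\alpha_x\rangle\,\d x$ and using that resolvent distributions commute (the right side of the distribution resolvent equation is symmetric in $\varphi_1,\varphi_2$), so that $R(\zeta)$ commutes with every $M(\varphi)$ and therefore $R(\zeta)\mu(\psi_j)=\mu(\psi_j)R(\zeta)=\mu\bigl(\psi_j/(\zeta-\,\cdot\,)\bigr)$, one obtains $R(\zeta)\int\varphi|\alpha_x\rangle\,\d x=\sum_j\mu\bigl(\psi_j/(\zeta-\,\cdot\,)\bigr)g_j=\int\frac{\varphi(x)}{\zeta-x}|\alpha_x\rangle\,\d x$, i.e.\ $R(\zeta)|\alpha_x\rangle=|\alpha_x\rangle/(\zeta-x)$; pairing with $f_1$ gives the first displayed formula. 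Multiplying the bra representation on the right by $R(\zeta)$ and using $\mu(\theta_i)R(\zeta)=\mu\bigl(\theta_i/(\zeta-\,\cdot\,)\bigr)$ yields $\langle\alpha'_x|R(\zeta)=\langle\alpha'_x|/(\zeta-x)$ the same way (no commutation needed here). For orthogonality, I would compose the bra representation of $\int\varphi_1(x)\langle\alpha'_x|\,\d x$ with the ket representation of $\int\varphi_2(y)|\alpha_y\rangle\,\d y$ and apply multiplicativity: the cross terms $\langle g_i|\mu(\theta_i)\mu(\psi_j)|g_j'\rangle=\langle g_i|\mu(\theta_i\psi_j)|g_j'\rangle=\int\theta_i(x)\psi_j(x)\langle g_i|\alpha_x\rangle\langle\alpha'_x|g_j'\rangle\,\d x$ are ordinary integrals because all four arguments are test functions, and summing gives $\int\bigl(\sum_i\theta_i\langle g_i|\alpha_x\rangle\bigr)\bigl(\sum_j\psi_j\langle\alpha'_x|g_j'\rangle\bigr)\,\d x=\int\varphi_1(x)\varphi_2(x)\,\d x$, which is $\langle\alpha'_x|\alpha_y\rangle=\delta(x-y)$. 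Finally, $A\mu(\psi)=\mu(x\psi)$ turns $\int\varphi|\alpha_x\rangle\,\d x=\sum_j\mu(\psi_j)g_j$, a finite sum of vectors in $\operatorname{dom}A$, into $A\int\varphi|\alpha_x\rangle\,\d x=\sum_j\mu(x\psi_j)g_j=\int x\varphi(x)|\alpha_x\rangle\,\d x$, i.e.\ $A|\alpha_x\rangle=x|\alpha_x\rangle$; symmetrically $\langle\alpha'_x|A=x\langle\alpha'_x|$.

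I expect the representation lemma to be the only genuine difficulty: upgrading the a priori merely ``generalized'' object $\int\varphi(x)|\alpha_x\rangle\,\d x$ to an actual element of $\mathfrak{H}$ \emph{together with} the finite decomposition $\sum_j\mu(\psi_j)g_j$ whose pieces satisfy the compatibility $\sum_j\psi_j(x)\langle\alpha'_x|g_j\rangle=\varphi(x)$ is exactly what lets one transport operator identities to the generalized eigenvectors and ``divide'' by arbitrary test functions. It is here that the hypotheses $\alpha_x,\alpha'_x\neq0$ on $G$, their smooth dependence on $x$, and the boundedness of $M(\varphi)$ are all used; once it is in hand, the remaining steps are bookkeeping.
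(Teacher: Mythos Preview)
Your proposal is correct and follows essentially the same route as the paper. Both arguments hinge on the same partition-of-unity construction: given $\varphi\in\mathcal{D}(G)$ with compact support, cover $\operatorname{supp}\varphi$ by finitely many neighbourhoods on which some $\langle\alpha'_x|g_j\rangle$ (respectively $\langle g_j|\alpha_x\rangle$) is bounded away from zero, and write $\int\varphi(x)|\alpha_x\rangle\,\d x=\sum_j\mu(\psi_j)g_j$ with $\psi_j=\varphi\rho_j/\langle\alpha'_\cdot|g_j\rangle\in\mathcal{D}(G)$. The orthogonality is then obtained in both proofs by pairing the two finite decompositions and invoking multiplicativity $\mu(\theta_i)\mu(\psi_j)=\mu(\theta_i\psi_j)$.

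The only organizational difference is that the paper derives the eigenvector relation \emph{first}, by a pointwise scalar argument: from Theorem~1.6 and continuity one gets $\langle f_1|\mu(x)R(\zeta)|f_2\rangle=\tfrac{1}{\zeta-x}\langle f_1|\mu(x)|f_2\rangle$ for each $x$, then substitutes $\mu(x)=|\alpha_x\rangle\langle\alpha'_x|$ and cancels the nonzero factor $\langle\alpha'_x|f_2\rangle$. You instead front-load the representation lemma and obtain the eigenvector relation afterwards by applying $R(\zeta)$ to the finite sum $\sum_j\mu(\psi_j)g_j$, using $R(\zeta)\mu(\psi)=\mu(\psi/(\zeta-\cdot))$ together with the commutation $R(\zeta)\mu(\psi)=\mu(\psi)R(\zeta)$ that follows from the symmetry of the distribution resolvent equation. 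Your ordering has the mild advantage that once $\int\varphi|\alpha_x\rangle\,\d x$ is known to lie in $L^2(G)$, applying the bounded operator $R(\zeta)$ to it is unambiguous, whereas the paper's pointwise formula $\langle f_1|R(\zeta)|\alpha_x\rangle$ requires a moment's thought about what $R(\zeta)$ acting on a generalized vector means; but the underlying content is the same.
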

 
 \begin{pf}
  Then by theorem 1.6 for $\zeta \in( G_1\times I) \setminus G$    
     \begin{multline*} \int \d x \varphi (x) \langle f_1| \mu(x)| R(\zeta)	f_2 \rangle
       =\int \d x \varphi (x) \langle f_1| \mu(x) R(\zeta)	|f_2 \rangle \\
       =   \int \d x \frac{1}{\zeta-x}\varphi (x) \langle f_1| \mu(x)|f_2 \rangle
       \end{multline*}
    As $\varphi$ is arbitrary and    $x \mapsto \langle f_1| \mu(x)|f_2 \rangle$ is continuous we conclude ,
    $$ \langle f_1| \mu(x)R(\zeta)| f_2 \rangle= \langle f_1| R(\zeta)\mu(x)| f_2\rangle
     = \frac{1}{\zeta-x} \langle f_1 |\mu(x)| f_2 \rangle $$
     If $ A$ is an operator such that $AR=RA = -1 +zR$, then $AM=MA = zM$ and we obtain by a similar argument 
   $$ \langle f_1| \mu(x)A| f_2 \rangle= \langle f_1| A\mu(x)| f_2\rangle
     = x \langle f_1 |\mu(x)| f_2 \rangle $$  
  Inserting the special form of $\mu$ we obtain 
     $$\langle f_1| R(\zeta)\mu(x)| f_2\rangle= \langle f_1| R(\zeta)| \alpha_x \rangle 
     \langle \alpha_x' |f_2 \rangle = \frac{1}{\zeta - x}\langle f_1|  \alpha_x \rangle 
     \langle \alpha_x' |f_2 \rangle	   $$
     As $  \langle \alpha_x' |f_2 \rangle$ can be chosen $\ne 0$
     $$ \langle f_1| R(\zeta)| \alpha_x \rangle 
      = \frac{1}{\zeta - x}\langle f_1|  \alpha_x \rangle $$   
      or 
    $$    R(\zeta)| \alpha_x \rangle 
      = \frac{1}{\zeta - x} |\alpha_x \rangle $$
      and $| \alpha_x \rangle $ is a right eigenvector of $R(\zeta)$ for the eigenvalue $1/(\zeta -x).$  The same argument holds for $  \langle \alpha_x' |$.   
 If $f,\varphi \in \mathcal{D}(G)$, then
      $$ \int \d x  \mu(x)| f \rangle\varphi(x) = \mu( \varphi)|f \rangle\in L^2(G)$$
      Furthermore we want to prove that
      $$ \int  \d x\varphi(x)| \alpha_x \rangle \in L^2(G)$$  
          Assume $x\in G$ the there exists an open neighborhood  $x \in N(x)\subset G$ 
      such  there exists a  function   $f \in \mathcal{D}(G)$  with 
       $| \langle  \alpha_x'| f\rangle|\ge 1$
      for $x\in N(x)$ . Assume $ \varphi \in \mathcal{D}(G)$  with support $K$ then there exists a finite family $x_i$ such that
      $ \bigcup_i N(x_i)\supset K$.  For any $i$ there exists a function $f_i$  with  $| \langle  \alpha_x'|f_i\rangle|\ge 1$
      for $x\in N_{x_i}$.  Choose a  partition of unity namely a family
      $\psi_i \in \mathcal{D}(G)$ such that $\sum_i \psi_i =1$ on $K$  and $ \text{support }( \psi_i) \subset N(x_i)$.
      Then
      \begin{multline*} \int  \d x\varphi(x)| \alpha_x \rangle = \sum_i  \int  \d x\varphi(x)\psi_i(x)| \alpha_x \rangle \langle \alpha'_x|	
      f_ i\rangle/\langle \alpha'_x|	
      f_ i\rangle\\
      = \sum _i \int \d x \frac{\varphi(x) \psi_i(x)}{\langle \alpha'_x|	  f_ i\rangle}\mu(x)| f_i \rangle\in L^2(G)
       \end{multline*}
     as  
     $$ \frac{\varphi(x) \psi_i(x)}{\langle \alpha'_x|	  f_ i\rangle}\in \mathcal{D}(G)$$
     Analog we have
       $$ \int  \d x\varphi(x) \langle\alpha_x' | =  \sum_j \frac{\varphi(x) \chi_j(x)}{\langle g_j|\alpha_x \rangle} 
   \langle g_j| \mu(x)\in L^2(G).$$
    Finally
   \begin{multline*}\iint \d x \d y \varphi_1(x) \varphi_2(y) \langle \alpha_x'| \alpha_y \rangle\\
   = \sum_{i,j}\iint \d x \d y\frac{\varphi_1(x) \chi_j(x)}{\langle g_j|\alpha_x \rangle}
   \frac{\varphi_2(y) \psi_i(y)}{\langle \alpha'_y|	  f_ i\rangle}
   \langle g_j| \mu(x) \mu(y)|  f_i \rangle
   \\ =  \sum_{i,j}\int \d x \frac{\varphi_1(x) \chi_j(x)}{\langle g_j|\alpha_x \rangle}
   \frac{\varphi_2(x) \psi_i(x)}{\langle \alpha'_x|	  f_ i\rangle}
   \langle g_j| \mu(x) |  f_i \rangle\\
   =  \sum_{i,j}\int \d x \frac{\varphi_1(x) \chi_j(x)}{\langle g_j|\alpha_x \rangle}
   \frac{\varphi_2(x) \psi_i(x)}{\langle \alpha'_x|	  f_ i\rangle}
   \langle g_j| \alpha_x \rangle  \langle \alpha'_x|  f_i \rangle
   \\=     \sum_{i,j}\int \d x \varphi_1(x) \chi_j(x)
   \varphi_2(x)\psi_i(x)
       = \int \d x \varphi_1(x) \varphi_2(x)\end{multline*}
       using the fact, that $\mu$ is multiplicative.
   \end{pf}

       \begin{rem} We cite Gelfand's definition \cite{Gelfand2}:{\em
         Assume a vector space  $V$ and and a linear mapping $A:V \to V$. A linear functional $F:V \to \mathbb{C}$ is called
         a generalized eigenvector for the eigenvalue $x$ if $F(Af)=xF(f)$ for all $f \in V.$}
     So our eigen vectors are are generalized eigen vectors in Gelfand's sense with $ V = \mathcal{D}(G)$. \end{rem}
      \begin{cor}
      Assume that the resolvent function $R$ has an isolated pole $w$ of finite order outside $G_1 \times I$, then
      in a neighborhood of $w$ 
      $$ R(z) = \sum_k b_k (z-w_1)^{-k} + O(1)$$
       and we have the equations
       $$ b_k| \alpha_x \rangle= 0 ,\quad  \langle \alpha_x' |b_k =0$$
       \end{cor}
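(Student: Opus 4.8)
The plan is to run the argument of Proposition 1.11 almost verbatim, with the isolated pole $w$ playing the role of $w_1$ and the continuous spectral part on $G$ playing the role of the second pole $w_2$: the conclusion will be an identity $b_k\cdot(\text{spectral data on }G)=0$, and, using the rank‑one form $\mu(x)=|\alpha_x\rangle\langle\alpha'_x|$ from Proposition 1.12, this is then split into the two stated vector identities by the localization trick in the proof of that proposition.

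First I would assemble a single resolvent distribution on an open set $\Omega$ containing both $w$ and $G$. Let $U$ be a neighborhood of $w$ on which $R$ is holomorphic off $w$. Proposition 1.10 furnishes a resolvent distribution on $U$ with $R(z)=\sum_k b_k\,\mathcal{P}/(z-w)^{k}+R_0(z)$, whose spectral distribution is supported at $w$ and is a finite combination of derivatives of $\delta(\cdot-w)$ with coefficients that are nonzero multiples of the $b_k$; concretely, for $\varphi_1$ supported in $U$ one has $M(\varphi_1)=\sum_k b_k\,\gamma_k(\varphi_1)$, where the scalars $\gamma_k(\varphi_1)$ are (up to nonzero constants) the derivatives $\partial^{\,k-1}\varphi_1(w)$ and hence may be prescribed independently. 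On $G_1\times I$ the construction of the preceding subsection gives the resolvent distribution $R(\varphi)=\int\d u\,R_u(\varphi_u)$ with $M(x+\i y)=\mu(x)\delta(y)$. By Schwartz's gluing lemma \cite{Schwartz1} these two distributions agree (both equal the holomorphic function $R$) wherever both are defined and so determine a distribution on $\Omega:=U\cup(G_1\times I)$; by Theorem 1.9, applied with the Lebesgue‑null set $G_0=\{w\}\cup G$ and $G_n$ the union of a $1/n$‑ball about $w$ with a $1/n$‑strip about $G$ — the non‑integrable pole being accounted for by the principal‑value $\,=\,$ limit identity of Proposition 1.10 — the glued distribution is again a resolvent distribution. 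Call it $R$ and set $M=(1/\pi)\dc R$.

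Next I would pick $\varphi_1\in\mathcal{D}(\Omega)$ supported in a small neighborhood of $w$ inside $U$ and $\varphi_2\in\mathcal{D}(\Omega)$ supported in a small piece of $G$ thickened into the strip, so that $\varphi_1\varphi_2\equiv 0$. Multiplicativity of $M$ (Theorem 1.6) gives $M(\varphi_1)M(\varphi_2)=M(\varphi_1\varphi_2)=0$ and likewise $M(\varphi_2)M(\varphi_1)=0$. Substituting the two descriptions of $M$, the first relation reads $\sum_k \gamma_k(\varphi_1)\,b_k\!\int\d x\,\varphi_2(x)\,|\alpha_x\rangle\langle\alpha'_x|=0$; since the $\gamma_k(\varphi_1)$ are arbitrary, $b_k\!\int\d x\,\varphi_2(x)\,|\alpha_x\rangle\langle\alpha'_x|=0$ for every $k$ and every $\varphi_2\in\mathcal{D}(G)$. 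Pairing on the right with a ket $|f_2\rangle$ and using, exactly as in the proof of Proposition 1.12, that $|\langle\alpha'_x|f_2\rangle|\ge 1$ can be arranged on any prescribed neighborhood (because $\alpha'_x\ne 0$ and $x\mapsto\alpha'_x$ is $C^\infty$), a partition of unity shows that $x\mapsto\varphi_2(x)\langle\alpha'_x|f_2\rangle$ exhausts $\mathcal{D}(G)$; hence $b_k\!\int\d x\,\psi(x)|\alpha_x\rangle=0$ for all $\psi\in\mathcal{D}(G)$, i.e. $b_k|\alpha_x\rangle=0$ in the sense that the bounded operator $b_k$ annihilates the $L^2$ vectors $\int\psi(x)|\alpha_x\rangle$ produced in Proposition 1.12. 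The relation $\langle\alpha'_x|b_k=0$ follows symmetrically from $M(\varphi_2)M(\varphi_1)=0$, now pairing on the left with a bra $\langle g_1|$ and using $|\langle g_1|\alpha_x\rangle|\ge 1$ locally.

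The Leibniz bookkeeping that pins down the constants $\gamma_k$, and the partition‑of‑unity manipulation that produces the $L^2$ vectors, are already carried out in the proofs of Proposition 1.10 and Proposition 1.12, so I would not repeat them. The one point requiring genuine care is the first paragraph: checking that the two locally defined resolvent distributions really do glue to a resolvent distribution on $\Omega$ across the pole at $w$, where $R(z)$ is not locally integrable and must be read as the principal value of Proposition 1.10, so that the hypotheses of Theorem 1.9 are actually met there. Once that is secured, everything downstream is a direct transcription of Proposition 1.11 combined with the rank‑one structure of $\mu$ from Proposition 1.12.
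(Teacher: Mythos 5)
The paper states this corollary without any proof, and your argument is precisely the intended one: glue the local resolvent distributions as in Proposition 1.11, use multiplicativity of $M$ on disjointly supported test functions to get $b_k\,\mu(\varphi_2)=\mu(\varphi_2)\,b_k=0$, and then use the rank-one form $\mu(x)=|\alpha_x\rangle\langle\alpha'_x|$ together with the partition-of-unity localization from the proof of Proposition 1.12 to separate this into the two stated vector identities. Your reconstruction is correct and takes essentially the same route the paper relies on.
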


       \section{Examples}    
         \subsection{  Finite dimensional matrix}
      By Jordan's normal form one obtains that
               $$ M(z)=\sum _i p_ i \sum_k   (1/k!)(-1)^k a_i^k \partial^k \delta(z- \lambda_i),$$
               where the $\lambda_i$ are the eigenvalues,the $p_i$ are the eigenprojectors,
               $p_ip_j= \delta_{ij}$ and the $a_i$ are nilpotent and $a_ip_j= \delta_{ij}a_i$.   Instead of $\partial$ we could have 
               chosen any other linear combination
                $D$ of $\partial_x$ and $\partial_y$, such that $Dz=1$. This an example, that there are many resolvent distributions extending
                a resolvent function.
               
               We show another example.  Assume $A=0$, then 
               $zR(z)=1$. This equation is solved by $R(z)=1/z-\pi	C \delta(z)$, where $C$ is an arbitrary matrix.  Then  $M(z)= \delta(z)-C \overline{\partial}\delta(z) $ and
               $M(\varphi)M( \psi)=M(\varphi \psi)$  if and only if $C^2=0$.

               \subsection{Unitary operator} Assume that $V$ is a Hilbert space and that $U$ is a unitary operator. Then
               $$ R(z)=\frac{1}{z-U}= \begin{cases}
\displaystyle{\frac{1/z}{1-U/z}}= \sum_{l=0}^\infty U^lz^{-l-1} & \text{ for }|z| > 1\\
\displaystyle{-\frac{1/U}{1-z /U}}= -\sum _{l=0}^\infty U^{-l-1}z^l & \text{ for }|z| < 1
\end{cases}$$
If $\varphi$ is a test function with support in $\mathbb{C} \setminus \{0\}$, then
$$ \int _{||z|-1| > \varepsilon} \d^2 z \varphi(z)R(z)= \int _{|r-1|> \varepsilon } r\d r \int \d 	 \vartheta 
\varphi( r \e^{\i \vartheta}) R( r \e^{\i \vartheta})$$
$$= -\int_{r<1-\varepsilon} r \d r \int \d \vartheta \varphi( r \e^{\i \vartheta})\sum_{l=0}^\infty U^{-l-1} r^l \e^{\i l \vartheta}$$ $$
+\int_{r>1+\varepsilon} r \d r \int \d \vartheta \varphi( r \e^{\i \vartheta})\sum_{l=0}^\infty U^l r^{-l-1} \e^{-\i (l+1)l \vartheta}
$$
Now
$$\int \d \vartheta \varphi( r \e^{\i \vartheta})\e^{\i l \vartheta}
= \frac{1}{1+l^2} \int \d \vartheta \varphi( r \e^{\i \vartheta})(1- \frac{\partial^2}{\partial \theta^2})\e^{\i l \vartheta}$$
$$= \frac{1}{1+l^2} \int \d \vartheta (1- \frac{\partial^2}{\partial \theta^2})\varphi( r \e^{\i \vartheta})\e^{\i l \vartheta}$$
So 
$$ \lim_{\varepsilon \to 0}\int _{||z|-1| > \varepsilon} \d^2 z \varphi(z)R(z)=\int  \d^2 z \varphi(z)R(z) $$
exists. The integral exists clearly for all test functions with support in the open unit circle. So the integral converges  for all test functions or in the sense of distributions. By theorem 1.9 the so defined distribution fulfills the resolvent distribution equation. We calculate
the spectral distribution.
$$ M(\varphi)=-(1/\pi)R(\dc \varphi)= \frac{1}{2 \pi \i }\big(	\int_{|z|= 1+0}R(z)\varphi(z)\d z- 
\int_{|z|= 1-0}R(z)\varphi(z)\d z\big)$$
and obtain
                  $$ \int M(z) \varphi(z) \d^2 z 
                  = \sum_{l= -\infty}^\infty U^l \frac{1}{2 \pi}\int \e^{-\i \vartheta l} \varphi (\e ^{\i \vartheta })
        \d \vartheta. $$
        So
        $$M(\varphi) =\int \d \vartheta \mu (\vartheta) \varphi (\e ^{\i \vartheta }),$$
        where $\mu$ is a distribution on the unit circle given by the Fourier series
        $$ \mu(\vartheta)= \sum_{l= -\infty}^\infty U^l  \e^{-\i \vartheta l} $$
        As $\mu$ is multiplicative one concludes, that $\mu$ is of positive type (s. \cite{Schwartz4}). From there one obtains an easy access to the usual spectral theorem for unitary operators.
        
         \subsection{Selfadjoint operator} Assume $V$ to be a  Hilbert space and $A$ to be a  self adjoint  operator.  Let  $E(x), x \in \mathbb{R}$ be the
        spectral family of $A$,then
        $$ R(\varphi) = \int \d ^2 R(z) \varphi(z) =\iint  \d ^2 z\varphi(z) 1/(z-x) \d E (x)$$
        The integral  can be defined for all $z$  and it is easy to see, that it defines a resolvent distribution. The spectral distribution is given by
        $$ M(x + \i y) = E'(x)\delta(y), $$
              where the derivative is in the sense of distributions.

\subsection{Eigen value problem of  the multiplication operator} The operator $\Omega$ on the real line is defined for any function $f$ by
       $$ (\Omega f) (\omega)= \omega f(\omega)$$
       As operator with a domain in $L^2(\mathbb{R})$ it has the resolvent 
       $$ R(z)= (z-\Omega)^{-1},$$
       defined for $\Im z \ne 0$. The domain of $\Omega$ is $R(z) L^2$ for any $z, \Im z \ne 0$. The 
       resolvent is holomorphic off the real line.       For any test function on the complex plane the integral
      \begin{align*}&
         R(\varphi)= \int d^2 z \varphi(z)/(z-\Omega),&&
           ( R(\varphi)f)(\omega)= \int d^2 z \varphi(z)/(z-\omega)f(\omega)\end{align*}
        exists and we define the resolvent distribution in that way. A short calculation shows, that the resolvent distribution equation is fulfilled.

        For the spectral distribution one has
        $$ M(x+ \i y) = \delta(x + \i y- \Omega) = \delta(x- \Omega) \delta(y) = \mu(x)\delta(y)$$
        $$(M(\varphi)f)(\omega)= (\iint \d x \d y M(x + \i y) \varphi ( x+\i y)f)(\omega)= \varphi(\omega+\i 0)f(\omega)$$ 
                Clearly $M$ is multiplicative. If one extends $\varphi $  to the constant function $1$  to the whole plane (e.g. that
        $\varphi$ converges locally uniformly and stays bounded), then $M(\varphi)$ converges weakly to the identity and  $M$ is complete in this sense.
        
        So 
        $$ \mu(x)= \delta(x-\Omega).$$
        Assume $f_1,f_2 \in \mathcal{D}(\mathbb{R})$ and consider the sesquilinear form
        $$ \langle f_1|\mu(x)| f_2 \rangle  = \int \d x \overline{f_1}(x) f_2(x) 
        = \int \ d x  \langle f_1| \delta_x \rangle \langle \delta_x| f_2 \rangle $$
      and  
      $$ \mu(x) = | \delta_x \rangle \langle \delta_x|,  $$
   where $ | \delta_x \rangle: f \mapsto \langle f | \delta_x \rangle =\overline{f}(x)$ is a right eigen vector of $\Omega$ and
   $\langle \delta_x|: f \mapsto    \langle \delta_x|f \rangle =f(x)$ is a left eigen vector which can be checked easily.  
      We have 
      $$ \int \d x \varphi(x)  | \delta_x \rangle =  | \varphi \rangle  $$
        $$ \int \d x \varphi(x)\langle \delta_x| = \langle \varphi|$$
   Hence $| \delta_x \rangle, \langle \delta_x|$ can be considered as generalized $L^2$ -vectors with the scalar product
    \begin{align*}  \iint \d x_1 \d x_2 (\varphi_1(x_1)\langle \delta_{x_1}|)( \varphi_2(x_2)  | \delta_{x_2} \rangle)&
        = \int \d x \varphi_1(x)\varphi_2(x)\\	
         \langle \delta_{x_1}|\delta_{x_2}\rangle&= \delta(x_1-x_2)\end{align*}

           \subsection{Eigen value problem of the perturbed multiplication operator }     This example is a caricature of the eigenvalue problem arising in the
        theory of radiation transfer
       in a gray atmosphere in plan parallel geometry \cite{EvW}.  We consider for some $c>1$ the set 
       $ G = ]-c,-1[ \cup ]1,c[\subset \mathbb{R}$ and the Hilbert space $\mathfrak{H}= L^2(G)$. On 
       $\mathfrak  {H}$ we define the multiplication operator
       $$ \Omega f(y)=yf(y)$$
       and two real $C^\infty $ functions $g,h$ on $\mathbb{R}$
        with $g(x)>0$ for $ 1<x<c$ and $-c < x< -1$ and 0 outside these two open intervals. We assume 
         $ g(y)=g(-y)$ and 
      $$ h(y)=\begin{cases} -g(y) &\text { for } y>1\\ g(y) & \text{ for } y<-1 \end{cases} $$
       We study $$H= \Omega + |g \rangle \langle h |.$$ 
       The operator is self adjoint in a Krein space. Define
       $Jf(\omega)=f(-\omega),$
       then $JHJ=H^*$ and this is the condition to be self adjoint in the Krein space $\mathfrak{H}$ with the scalar product
       $f_1,f_2 \mapsto \langle f_1, Jf_2 \rangle$.
        We will not use this fact. 
       \begin{prop}
       The operator $H$ is not normal.
       \end{prop}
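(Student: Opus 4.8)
The plan is to verify directly that $HH^*\neq H^*H$. Since $G$ is a bounded subset of $\mathbb{R}$, the multiplication operator $\Omega$ is a \emph{bounded} self-adjoint operator on $\mathfrak{H}=L^2(G)$, so $H=\Omega+|g\rangle\langle h|$ is bounded, $H^*=\Omega+|h\rangle\langle g|$, and no domain issues arise. First I would record the parities of the data: $g$ is even and $h$ is odd (indeed $h(y)=-g(y)$ for $y>0$, $h(y)=g(y)$ for $y<0$, while $g(-y)=g(y)$), and $G$ is symmetric about the origin. From this, $\|h\|^2=\|g\|^2=:N>0$ (as $h^2=g^2$ pointwise), $\langle g\,|\,h\rangle=\int_G gh=0$ (even times odd on a symmetric domain), and $\langle g\,|\,\Omega g\rangle=\int_G y\,g(y)^2\,\d y=0$ (odd times even). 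I also set $K:=\int_1^c y\,g(y)^2\,\d y>0$, positivity holding because $g>0$ on $]1,c[$.

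Next I would expand the self-commutator. Writing $P=|g\rangle\langle h|$, so that $P^*=|h\rangle\langle g|$, $PP^*=\|h\|^2|g\rangle\langle g|$ and $P^*P=\|g\|^2|h\rangle\langle h|$, one has the finite-rank operator
\[
HH^*-H^*H=\Omega P^*-\Omega P+P\Omega-P^*\Omega+PP^*-P^*P,
\]
and I would apply it to the vector $g$. Using the three identities above, each term collapses: $\Omega P^*g=N\,\Omega h$, $\Omega Pg=\langle h|g\rangle\,\Omega g=0$, $P\Omega g=\langle h|\Omega g\rangle\,g=-2K\,g$, $P^*\Omega g=\langle g|\Omega g\rangle\,h=0$, $PP^*g=N^2g$ and $P^*Pg=N\langle h|g\rangle\,h=0$, whence
\[
(HH^*-H^*H)\,g=N\,\Omega h+(N^2-2K)\,g .
\]

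Finally I would restrict this function to the interval $]1,c[$, where $h=-g$ and hence $(\Omega h)(y)=-y\,g(y)$, so that there $\big((HH^*-H^*H)g\big)(y)=g(y)\,(N^2-2K-Ny)$. Since $g(y)>0$ for $1<y<c$ and $y\mapsto N^2-2K-Ny$ is a non-constant affine function (the slope being $-N\neq0$), this product vanishes at most at one point of $]1,c[$, hence is not the zero element of $L^2(G)$. Therefore $(HH^*-H^*H)g\neq0$, i.e.\ $HH^*\neq H^*H$, so $H$ is not normal. The computation is short and I expect no genuine obstacle; the only step needing care is the second one — correctly identifying $H^*$ (harmless here because $\Omega$ is bounded) and using the even/odd structure of $g,h$ to evaluate the scalar products $\langle h|g\rangle$, $\langle h|\Omega g\rangle$, $\langle g|\Omega g\rangle$.
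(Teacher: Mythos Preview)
Your proof is correct. Both you and the paper argue by direct computation that $HH^*\neq H^*H$, exploiting the even/odd parity of $g$ and $h$; the difference is in how the discrepancy is exhibited. The paper introduces the symmetrization projector $S$ onto even functions, $(Sf)(\omega)=\tfrac12(f(\omega)+f(-\omega))$, and uses $Sg=g$, $Sh=0$ to see that $SHH^*S$ and $SH^*HS$ differ (the term $\langle h|h\rangle\,|g\rangle\langle g|$ survives in one but not the other). You instead apply the self-commutator to the single vector $g$ and read off the result pointwise on $]1,c[$. Your route is a bit more hands-on but avoids introducing $S$ and yields an explicit nonzero vector; the paper's route is more structural and would generalize more readily to other rank-one perturbations respecting a symmetry. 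Either way the parity identities $\langle h|g\rangle=0$, $\langle g|\Omega g\rangle=0$, $\|h\|=\|g\|$ are the engine, and you have identified and used them correctly.
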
 
       \begin{pf} We have
       $$H^*= \Omega + |h \rangle \langle g |.$$ 
       and
       $$ HH^*= \Omega^2 + \Omega |g \rangle \langle h |+ | g \rangle \langle h | \Omega
       + |g \rangle \langle h | h \rangle \langle g| $$
       $$ H^*H= \Omega^2 + \Omega |h \rangle \langle g|+ | h \rangle \langle g | \Omega
       + |h \rangle \langle g | g \rangle \langle h| $$
       Denote by $S$ the symmetry operator 
      $$(Sf)(\omega)=(1/2(f(\omega)+f(-\omega)).$$
      Then $S \Omega ^2 =\Omega ^2, Sg=g, Sh=0$ and
      $$ S HH^*S = \Omega^2 
       + |g \rangle \langle h | h \rangle \langle g|,\quad \quad SH^*HS= \Omega^2 $$.
       Hence $HH^*\ne H^*H$ and  $H$ is not normal.
       \end{pf}
         Define for $m=0,1,\cdots$ and for any test function $\varphi \in \mathcal{D}(\mathbb{R})$ the norm
 $$ \| \varphi \| _m = \max{\{|\partial_x^k\varphi(x)|:	x\in \mathbb{R},k=1,\cdots,m\}}.$$

       \begin{lem}
Consider the distribution $T_u$ given by the function
$$ T_u(x) = \frac{1 }{x+ \i u}$$
for $u \ne 0$ and by
$$ T_{\pm 0}(x) = \frac{1}{x \pm \i 0}= \frac{\mathcal{P}}{x} \mp \i \pi \delta(x)$$
The distribution $T_u$ is a holomorphic furnction in $ z= x + \i u$ for $u \ne 0$ and is continuous in the lower and in the upper half plane, more precisely $u \mapsto T_u(\varphi)$ is continuous for $ u \ge 0$
and has the boundary value $T_+(\varphi)$ and is continuous for $u \le 0$ and has the boundary value
$T_-(\varphi)$ for any $ \varphi \in \mathcal{D}$. The continuity is uniform for all $\varphi, \| \varphi \|_1 \le 1$ and fixed compact support.
\end{lem}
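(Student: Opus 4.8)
The plan is to reduce the whole statement to explicit estimates for the Poisson kernel $u/(x^2+u^2)$ and its conjugate $x/(x^2+u^2)$. The holomorphy assertion needs no work: for $u\neq0$ the function $x\mapsto T_u(x)=1/(x+\i u)$ is the restriction to the horizontal line $\Im z=u$ of $1/z$, holomorphic on $\mathbb C\setminus\{0\}$; and for $u\neq0$ the pairing $T_u(\varphi)=\int\varphi(x)/(x+\i u)\,\d x$ is an absolutely convergent integral depending continuously (indeed holomorphically in $u$) on $u$ on each open half-line. Moreover, for $u_1,u_2$ of the same sign with $|u_1|,|u_2|\ge\delta$ one has $|T_{u_1}(\varphi)-T_{u_2}(\varphi)|=|u_1-u_2|\,\bigl|\int\varphi(x)/((x+\i u_1)(x+\i u_2))\,\d x\bigr|\le\delta^{-2}|u_1-u_2|\int|\varphi|$, which gives continuity away from $0$ with a modulus of continuity controlled by $\delta$, the fixed support and $\|\varphi\|_1$. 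Finally, since $T_{-u}(\varphi)=\overline{T_u(\overline\varphi)}$ and likewise $T_{-0}(\varphi)=\overline{T_{+0}(\overline\varphi)}$, it suffices to establish the boundary value as $u\downarrow0$.

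So I would fix $R$ with $\mathrm{supp}\,\varphi\subset[-R,R]$, write $T_u(x)=\dfrac{x}{x^2+u^2}-\i\,\dfrac{u}{x^2+u^2}$ for $u>0$, and treat the two terms separately. For the conjugate-Poisson term I use that the kernel is odd, so $\int_{-R}^{R}\frac{x}{x^2+u^2}\,\d x=0$; this lets me replace $\varphi(x)$ by $\varphi(x)-\varphi(0)=x\,\psi(x)$, where $\psi\in C^\infty$ and $\|\psi\|_\infty\le\|\varphi\|_1$ by the mean value theorem, obtaining $\int_{-R}^{R}\frac{x^2}{x^2+u^2}\psi(x)\,\d x$; since $\frac{x^2}{x^2+u^2}-1=-\frac{u^2}{x^2+u^2}$, this differs from $\int_{-R}^{R}\psi(x)\,\d x=(\mathcal P/x)(\varphi)$ by at most $\|\varphi\|_1\int_{-R}^{R}\frac{u^2}{x^2+u^2}\,\d x\le\pi u$. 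For the Poisson term I use the exact normalisation $\int_{\mathbb R}\frac{u}{x^2+u^2}\,\d x=\pi$ to write $\int\frac{u}{x^2+u^2}\varphi(x)\,\d x-\pi\varphi(0)=\int\frac{u}{x^2+u^2}\bigl(\varphi(x)-\varphi(0)\bigr)\,\d x$, then split at $|x|=R$: the piece over $[-R,R]$ is bounded by $\|\varphi\|_1\int_{-R}^{R}\frac{u|x|}{x^2+u^2}\,\d x=\|\varphi\|_1\,u\ln\frac{R^2+u^2}{u^2}$, and the tail by $|\varphi(0)|\int_{|x|>R}\frac{u}{x^2+u^2}\,\d x\le|\varphi(0)|\,\tfrac{2u}{R}$, with $|\varphi(0)|\le R\|\varphi\|_1$ since $\varphi(-R)=0$. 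Summing the three contributions bounds $\bigl|T_u(\varphi)-T_{+0}(\varphi)\bigr|$ by a quantity tending to $0$ as $u\downarrow0$ and depending on $\varphi$ only through $\|\varphi\|_1$ and $R$; the limit is $(\mathcal P/x)(\varphi)-\i\pi\varphi(0)$, which is exactly $T_{+0}(\varphi)$. Together with the first paragraph this shows $u\mapsto T_u(\varphi)$ extends continuously to $[0,\infty)$ with a modulus of continuity uniform over $\{\varphi:\|\varphi\|_1\le1,\ \mathrm{supp}\,\varphi\subset[-R,R]\}$, and the case $u\le0$ is identical by the conjugation symmetry.

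I do not anticipate a real obstacle; the two points needing a little care are (i) that every estimate must be expressed through $\|\varphi\|_1$ alone, which is why one subtracts $\varphi(0)$ and invokes the mean value theorem for the bounds $|\varphi(x)-\varphi(0)|\le|x|\,\|\varphi\|_1$ and $|\varphi(0)|\le R\,\|\varphi\|_1$ rather than using a sup-norm, and (ii) that subtracting the constant $\varphi(0)$ destroys compact support, producing a tail at infinity for the Poisson term; this is handled cleanly by the exact identity $\int_{\mathbb R}u/(x^2+u^2)\,\d x=\pi$ instead of an approximate-identity or Taylor argument. The remaining steps are explicit integrations of rational functions.
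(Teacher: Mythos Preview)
Your proof is correct and uses the same real/imaginary decomposition $T_u(x)=\dfrac{x}{x^2+u^2}-\i\,\dfrac{u}{x^2+u^2}$ as the paper. The paper handles the two pieces slightly differently: for the conjugate--Poisson term it integrates by parts, writing $\int\frac{x\varphi(x)}{x^2+u^2}\,\d x=-\tfrac12\int(\partial\varphi)(x)\ln(x^2+u^2)\,\d x$ and letting $u\to0$ inside the logarithm, while for the Poisson term it rescales $x=|u|t$ to get $-\i\,\mathrm{sgn}(u)\int\frac{\varphi(|u|t)}{1+t^2}\,\d t\to\mp\i\pi\varphi(0)$. Your subtraction-of-$\varphi(0)$ arguments are an equally standard alternative; they have the merit of making the uniform $\|\varphi\|_1$--dependence fully explicit, whereas in the paper's version one has to observe that the integration by parts and the dominated-convergence step are controlled by $\|\partial\varphi\|_\infty$ and a fixed support bound. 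Either route yields the lemma.
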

\begin{pf}
We consider only the convergence $u \to \pm 0$.  We have
$$ T_u(\varphi)= \int \d x \frac{\varphi(x)}{x + \i u)}= \int \d x \frac{x \varphi(x)}{x^2+u^2}
- \i\int \d x \frac{u \varphi(x)}{x^2+u^2}= I+II $$
Then
$$I= - (1/2)\int \d x \partial \varphi(x) \ln (x^2+u^2)\to -\int \d x \partial \varphi(x) \ln|x| = \int \d x\varphi(x) \frac{\mathcal{P}}{x}$$
and
$$ II=- \i \frac{u }{|u|} \int \d t \frac{\varphi( |u| t)}{1+t^2}\to \mp \i \pi \varphi(0)$$
\end{pf}

\begin{lem}
Define the function
$$ R_{\Omega,u}(x) = R_\Omega(x+ \i u)=1/(x + \i u -\Omega).$$
Then $R_u$ converges in the sense of distributions for $u \to \pm 0$ to 
$$R_{\Omega, \pm 0}= \frac{1}{x  \pm\i 0  -\Omega}=\frac{\mathcal{P}}{x- \Omega}\mp \i \pi\delta(x- \Omega)$$
$$ (R_{\Omega,\pm 0}(\varphi)f)(\omega)= \iint \d x \d \omega\frac{\mathcal{P}}{x- \omega}\varphi(x) f(\omega)
\mp \i \pi\iint \d x \d \omega \delta(x- \omega) \varphi(x) f(\omega) $$
The convergence is uniform for all $\varphi$ with $\| \varphi\|_1 \le 1$ with support in a fixed compact set and in operator topology on $\mathfrak{H}=L^2(G)$
\end{lem}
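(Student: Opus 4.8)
The plan is to exploit that $R_{\Omega,u}(\varphi)$ is a \emph{multiplication operator} on $\mathfrak H=L^2(G)$ and to reduce everything to the scalar statement of the preceding lemma about $T_u$. First I would note that for $f\in\mathfrak H$ and $\varphi\in\mathcal D$ the substitution $x=t+\omega$ gives
\[
\bigl(R_{\Omega,u}(\varphi)f\bigr)(\omega)=\Bigl(\int\d x\,\frac{\varphi(x)}{x+\i u-\omega}\Bigr)f(\omega)=T_u(\tau_\omega\varphi)\,f(\omega),\qquad \tau_\omega\varphi(t):=\varphi(t+\omega).
\]
So $R_{\Omega,u}(\varphi)$ is multiplication by the function $\omega\mapsto T_u(\tau_\omega\varphi)$, which is continuous on $G$; since the operator norm of a multiplication operator on $L^2(G)$ is the supremum of its multiplier, it is enough to show that $\omega\mapsto T_u(\tau_\omega\varphi)$ converges, as $u\to\pm0$, uniformly in $\omega\in G$ and uniformly for $\|\varphi\|_1\le1$ with fixed compact support.

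Next I would feed this into the preceding lemma. Two observations make it work: the seminorm is translation invariant, $\|\tau_\omega\varphi\|_1=\|\varphi\|_1$ (it only involves derivatives and the supremum over all of $\mathbb R$); and $G\subset[-c,c]$ is bounded, so $\operatorname{supp}(\tau_\omega\varphi)=\operatorname{supp}\varphi-\omega$ lies, for $\operatorname{supp}\varphi\subset K$ and $\omega\in G$, in the single compact set $K':=K+[-c,c]$, independent of $\omega$ and of $\varphi$. Applying the preceding lemma to the family $\{\tau_\omega\varphi:\omega\in G,\ \|\varphi\|_1\le1,\ \operatorname{supp}\varphi\subset K\}$ --- all of whose members satisfy $\|\cdot\|_1\le1$ and are supported in $K'$ --- yields
\[
\bigl\|R_{\Omega,u}(\varphi)-R_{\Omega,\pm0}(\varphi)\bigr\|_{L(\mathfrak H)}=\sup_{\omega\in G}\bigl|T_u(\tau_\omega\varphi)-T_{\pm0}(\tau_\omega\varphi)\bigr|\longrightarrow 0\qquad(u\to\pm0),
\]
with the stated uniformity, where $R_{\Omega,\pm0}(\varphi)$ is by definition multiplication by $\omega\mapsto T_{\pm0}(\tau_\omega\varphi)$.

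Finally I would identify the limit explicitly. From $T_{\pm0}=\mathcal P/x\mp\i\pi\delta$ we get
\[
T_{\pm0}(\tau_\omega\varphi)=\int\frac{\mathcal P}{t}\,\varphi(t+\omega)\,\d t\mp\i\pi\,\varphi(\omega)=\int\frac{\mathcal P}{x-\omega}\,\varphi(x)\,\d x\mp\i\pi\,\varphi(\omega),
\]
which is exactly the multiplier of $\tfrac{\mathcal P}{x-\Omega}\mp\i\pi\,\delta(x-\Omega)$ applied to $\varphi$; spelling out the action on $f$ reproduces the displayed formula for $(R_{\Omega,\pm0}(\varphi)f)(\omega)$, and $\omega\mapsto T_{\pm0}(\tau_\omega\varphi)$ is bounded on $G$ (being a uniform limit of the bounded functions $T_u(\tau_\omega\varphi)$, or directly from $\bigl|\int\tfrac{\mathcal P}{t}\psi(t)\,\d t\bigr|\le C(K')\,\|\psi\|_1$ for $\psi$ supported in $K'$), so $R_{\Omega,\pm0}(\varphi)$ is indeed a bounded operator. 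The holomorphy of $R_{\Omega,u}$ in $z=x+\i u$ for $u\ne0$ is immediate, since $z\mapsto(z-\Omega)^{-1}$ is operator-holomorphic off $\overline G\subset\mathbb R$. The only real work is the uniformity bookkeeping of the second paragraph; there is no analytic obstacle beyond what the scalar lemma already provides.
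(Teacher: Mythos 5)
Your proof is correct, and it reaches the conclusion by a slightly different mechanism than the paper, although both rest on the same scalar lemma about $T_u$ and the same translation trick. You observe that $R_{\Omega,u}(\varphi)$ is the multiplication operator by $\omega\mapsto T_u(\tau_\omega\varphi)$, so operator-norm convergence is exactly uniform convergence of the multiplier over $\omega\in G$; you then apply the preceding lemma to the family $\{\tau_\omega\varphi\}$, which is $\|\cdot\|_1$-bounded (translation invariance) and supported in the fixed compact set $K+[-c,c]$ (boundedness of $G$). The paper instead tests against $f_1,f_2\in L^2(G)$, changes variables to write $\langle f_1|R_{\Omega,u}(\varphi)|f_2\rangle=T_u(\psi)$ with $\psi(x)=\int\d\omega\,\overline{f_1(\omega)}f_2(\omega)\varphi(x+\omega)$, and uses $\|\psi\|_1\le\|\varphi\|_1\|f_1\|\|f_2\|$ together with the fixed compact support of $\psi$ to get convergence of the sesquilinear forms uniformly over the unit balls, hence in operator norm. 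Your route is cleaner here because the operator is diagonal, and it yields the explicit limit multiplier $\int\frac{\mathcal P}{x-\omega}\varphi(x)\,\d x\mp\i\pi\varphi(\omega)$ directly; the paper's route has the advantage that the same computation carries over verbatim to the non-diagonal perturbed resolvent treated immediately afterwards, where no multiplication-operator structure is available. Both arguments are complete; the only cosmetic caveat in yours is that the norm of a multiplication operator is the essential supremum of the multiplier, which coincides with the supremum since your multiplier is continuous on $G$.
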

\begin{pf}
 Assume a test function  $\varphi$   and consider for $f_1, f_2 \in L^2(G)$ the expression
$$ \langle f_1 | R_{\Omega,u}(\varphi)| f_2 \rangle= 
\iint \d \omega \d x \frac  {\overline{ f_1(\omega)}f_2(\omega) \varphi(x)}{x+\i u -\omega}$$ $$
=\iint \d \omega \d x \frac  {\overline{ f_1(\omega)}f_2(\omega) \varphi(x+ \omega)}{x+\i u }
= \int \frac{\psi(x)}{x+ \i u}$$
with 
$$ \psi(x)= \int d \omega\overline{ f_1(\omega)}f_2(\omega) \varphi(x+ \omega)$$
and $\psi \in \mathcal{D}$, the support is in a fixed compact interval and 
$$\| \psi \|_1 \le \| \varphi\|_1 \|f_1\| \|f_2\|. $$
Hence the expression converges in operator topology to $R_{\Omega,\pm 0}$
\end{pf}

       The resolvent of $H$  is given by Krein's formula and can be checked easily
       \begin{align*}
       R(z)&=R_\Omega(z)+ R_\Omega(z)|g \rangle \langle h| R_\Omega(z)/C(z)\\
        C(z) &= 1- \langle h| R_\Omega (z)|g \rangle
        = 
       1 - \int_G\d y \frac{g(y)h(y)}{z- y}= 1 + \int _1^c \d y \frac{2 y g(y)^2}{ z^2-y^2}
      \end{align*}
      We discuss at first $C(z)$. The function is well defined and holomorphic outside $G$. Following the lemma and after a variable transform we obtain 
      $$ C(x \pm \i 0)=C_1(x) \pm \i \pi C_2(x)= 1- \int \d y  g(y) h(y)\frac{\mathcal{P}}{x-y} \pm \i\pi  g(x)h(x). $$
      So $ C(x \pm \i 0)\ne 0$
for $z \in G$.     We investigate the zeros of $C(z)$. We have for $z = x + \i u, z \notin G$
      $$ \Im  C(z) = \int _1^c \d y  \frac{4 y^2 xu g(y)^2}{(x^2-u^2 -y^2)^2+ 4 x^2u^2}$$
   
         Hence $C(z)=0$ implies $xu=0$ , so either $x$ or $u$ or both vanish. 
         We have for $x=0$
         \begin{align*}&                                                         
         C(0)=1-\int_1^c \d y \frac{2g(y)^2}{y},&&
       C(\i u)= 1 - \int_1^c \d y \frac{ 2 y g(y)^2}{u^2+y^2}\end{align*}
      So $C(\i u)$ is monotonic increasing for increasing $u^2$ and goes to 1 for $u^2 \to \infty$.
       If $ C(0) < 0$, there exists exactly one $ u_0> 0$
    such that $C(\i u_0)=0$, if $C(0) >0$, then $C(\i u )> 0$ for all $u$.
    
    If $u=0$  and $| x| \le 1$ we have
    $$ C(x)=1-\int_1^c \d y \frac{2yg(y)^2}{y^2-x^2}$$
    and is monotonic decreasing for increasing $x$. If $C(0)>0$ and $C(1)<0$ there exists exactly one $x_0$ with $0<x_0<1$, such that
    $C(x_0)=0$. If $C(0)<0$ there does not exist such an $x$. We do not discuss the case $C(0)> 0$ and $C(1) \ge 0$. In case
    $C(0)=0$ we have a double zero. For $|x| \ge c$ we have $C(x) \ge 1$.

Hence $C(x)$ does not vanish for $|x| \ge c $ in any case.

The singularities of the resolvent  are the slits $[-c,-1]$ and $[1,c]$ and the zeros of $C(z)$. In the neighborhood of a zero   of $C(z)$
we may define a resolvent distribution with the help of proposition 1.10
 We discuss  the behavior of the resolvent in the
neighborhood of 
the slits. 

There exists an open neighborhood  $G_1\subset \mathbb{R}$ of  the closure $\overline{G }$
and  an open interval  $I$ containing 0 in its interior,  such that $C(z) \ne 0$   for $z\in( G_1\times I)\setminus (G\times \{0\})$ 
.
 \begin{prop}
 Consider the restriction of the function $R(z)$ to $( G_1\times I)\setminus ( G\times \{0\}$ and define the distribution 
 $R_u, u\in I \setminus \{0\}$ on  $G_1$ by
 $$ R_u(\varphi)= \int \d x \varphi(x) R(x+ \i u)$$
 with $\varphi \in \mathcal{D}(G_1)$. 
 Then $R_u$ converges in operator norm to $ R_{ \pm \i 0} $ uniformly in $\| \varphi \|_2 \le 1$.
 \end{prop}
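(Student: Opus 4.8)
The plan is to feed Krein's formula into the two lemmas already established. For $\varphi\in\mathcal{D}(G_1)$ and $u\in I\setminus\{0\}$ write
$$ R_u(\varphi)=\int\d x\,\varphi(x)R_\Omega(x+\i u)+\int\d x\,\varphi(x)\,\frac{R_\Omega(x+\i u)|g\rangle\langle h|R_\Omega(x+\i u)}{C(x+\i u)}=:R_{\Omega,u}(\varphi)+S_u(\varphi). $$
The first summand is $R_{\Omega,u}(\varphi)$ in the notation of the lemma on $R_{\Omega,u}$, which already gives convergence in operator norm to $R_{\Omega,\pm 0}(\varphi)$, uniformly for $\varphi$ with $\|\varphi\|_1\le 1$ and support in the bounded set $\overline{G_1}$, hence a fortiori uniformly for $\|\varphi\|_2\le 1$. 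Everything thus reduces to $S_u(\varphi)$, where the whole difficulty sits.

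First I would tidy up the factor $1/C$. We may take $G_1,I$ bounded and so small that $|C|\ge\delta>0$ on each of the two compact sets $\{x+\i u:x\in\overline{G_1},\,0\le u\le\varepsilon_0\}$ and $\{x+\i u:x\in\overline{G_1},\,-\varepsilon_0\le u\le 0\}$, with the value $C(x\pm\i 0)$ at $u=0$; this is possible because, by the discussion preceding the proposition, $C$ extends continuously to these sets and does not vanish on them. Integrating by parts $k$ times in $C(x+\i u)=1-\int\d y\,g(y)h(y)/(x+\i u-y)$ gives $\partial_x^k C(x+\i u)=-\int\d y\,(gh)^{(k)}(y)/(x+\i u-y)$ for $k\ge1$; after the substitution $y\mapsto y-x$ this is a $T_u$-type integral of a translate of $(gh)^{(k)}\in\mathcal{D}$ whose $\|\cdot\|_1$-norm and support stay bounded as $x$ runs over $\overline{G_1}$, so the lemma on the distributions $T_u$ yields $\partial_x^k C(x+\i u)\to\partial_x^k C(x\pm\i 0)$ uniformly on $\overline{G_1}$. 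Hence $C(\cdot+\i u)\to C(\cdot\pm\i 0)$ in every $C^k(\overline{G_1})$, and since these functions stay $\ge\delta$ in modulus, $1/C(\cdot+\i u)\to 1/C(\cdot\pm\i 0)$ in $C^2(\overline{G_1})$. Therefore $\Phi_u:=\varphi/C(\cdot+\i u)\in\mathcal{D}(G_1)$ has support inside $\operatorname{supp}\varphi$, and — since on test functions supported in the fixed compact $\overline{G_1}$ the norm $\|\cdot\|_2$ controls the full $C^2$-norm — satisfies $\|\Phi_u\|_2\le K\|\varphi\|_2$ and $\|\Phi_u-\Phi_{\pm 0}\|_2\le\eta(u)\|\varphi\|_2$ with $\eta(u)\to 0$, where $\Phi_{\pm 0}:=\varphi/C(\cdot\pm\i 0)$.

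Then $S_u(\varphi)=\int\d x\,\Phi_u(x)R_\Omega(x+\i u)|g\rangle\langle h|R_\Omega(x+\i u)$ is the integral operator on $\mathfrak{H}=L^2(G)$ with kernel
$$ K_u(\omega,\omega')=g(\omega)h(\omega')\int\d x\,\frac{\Phi_u(x)}{(x+\i u-\omega)(x+\i u-\omega')}, $$
using that $h$ is real; it is supported in the fixed compact square $\overline{G}\times\overline{G}$. Partial fractions in $x$ and Taylor's formula turn this into $K_u(\omega,\omega')=g(\omega)h(\omega')\int_0^1\Psi_u'\bigl(\omega'+t(\omega-\omega')\bigr)\d t$, where $\Psi_u'(\omega)=\int\d x\,\Phi_u'(x)/(x+\i u-\omega)$ (one integration by parts having killed the double pole). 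The integral defining $\Psi_u'(\omega)$ is, after a translation, a $T_u$-type functional of $\Phi_u'$ with $\|\Phi_u'\|_1\le\|\Phi_u\|_2$ and support uniformly bounded in $\omega\in\overline{G}$; so, splitting $\Psi_u'-\Psi_{\pm 0}'=(T_u-T_{\pm 0})\bigl(\Phi_u'(\omega+\cdot)\bigr)+T_{\pm 0}\bigl((\Phi_u'-\Phi_{\pm 0}')(\omega+\cdot)\bigr)$, the lemma on $T_u$ bounds the first term by $\varepsilon(u)\|\Phi_u'\|_1$ with $\varepsilon(u)\to 0$, and the boundedness of the fixed functional $T_{\pm 0}=\mathcal{P}/x\mp\i\pi\delta$ on test functions with bounded $\|\cdot\|_1$ and fixed support bounds the second by $C_0\|\Phi_u-\Phi_{\pm 0}\|_2$. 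With the estimates of the previous paragraph this gives $\|\Psi_u'-\Psi_{\pm 0}'\|_{L^\infty(\overline{G})}\le\bigl(\varepsilon(u)K+C_0\eta(u)\bigr)\|\varphi\|_2\to 0$, hence $\|K_u-K_{\pm 0}\|_\infty\to 0$, hence $\|K_u-K_{\pm 0}\|_{L^2(G\times G)}\to 0$, and by the Hilbert--Schmidt estimate $\|S_u(\varphi)-S_{\pm 0}(\varphi)\|\le\|K_u-K_{\pm 0}\|_{L^2(G\times G)}\to 0$, all uniformly for $\|\varphi\|_2\le 1$. Adding the two summands, $R_u(\varphi)\to R_{\pm\i 0}(\varphi):=R_{\Omega,\pm 0}(\varphi)+S_{\pm 0}(\varphi)$ in operator norm, uniformly for $\|\varphi\|_2\le 1$.

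The main obstacle is precisely the step of the third paragraph: the Krein kernel carries the double pole $(x+\i u-\omega)^{-1}(x+\i u-\omega')^{-1}$, which has to be absorbed by the partial-fraction and integration-by-parts manipulation before the scalar Plemelj lemma can be applied, while at the same time the perturbation denominator $1/C$ must first be controlled in $C^2$ so that $\Phi_u$ behaves like a genuinely convergent family of test functions. It is exactly this loss of two derivatives — one to annihilate the double pole, one passed through Leibniz's rule to the factor $1/C$ — that forces the hypothesis $\|\varphi\|_2\le 1$ rather than the $\|\varphi\|_1\le 1$ that was enough for the unperturbed operator.
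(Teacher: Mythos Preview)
Your proof is correct and follows the same strategy as the paper: split via Krein's formula, dispatch the first summand with the lemma on $R_{\Omega,u}$, and for the second summand absorb the double pole $(x+\i u-\omega)^{-1}(x+\i u-\omega')^{-1}$ by the partial-fraction/integration-by-parts device before invoking the scalar $T_u$ lemma, with the factor $1/C$ controlled in $C^2$. The only difference is cosmetic: the paper works throughout with the bilinear form $\langle f_1|R_u(\varphi)|f_2\rangle$ and collapses the second summand to a single integral $\int \d x\,\chi_u(x)/(x+\i u)$ with $\chi_u$ built from $\partial_x(\varphi/C)$, whereas you estimate the integral kernel $K_u(\omega,\omega')$ pointwise via $\Psi_u'$ and finish with the Hilbert--Schmidt bound; the underlying analytic content is identical, and your version is in fact more explicit about the convergence step and about why $1/C(\cdot+\i u)$ stays uniformly $C^2$.
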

\begin{pf}
 Assume a test function  $\varphi$ with support in $G_1$ and consider for $f_1, f_2 \in L^2(G)$ the expression

$$ \langle f_1 | R_u(\varphi)| f_2 \rangle= $$ $$
\iint \d \omega \d x \frac  {\overline{ f_1(\omega)}f_2(\omega) \varphi(x)}{x+\i u -\omega}
+  \iiint \d \omega_1 \d \omega_2 \d x \frac{\overline{f_1(\omega_1)}g(\omega_1)\overline{h(\omega_2)} 
f_2(\omega_2)\varphi(x)}{C(x+ \i u) (x+\i u - \omega_1}. $$

The first term on the  right hand side equals
$$ 
\iint \d \omega \d x \frac  {\overline{ f_1(\omega)}f_2(\omega) \varphi(x)}{x+\i u -\omega}=
\int \d x \frac{\psi(x)}{x + \i u}$$
with
$$ \psi(x)= \int d \omega\overline{ f_1(\omega)}f_2(\omega) \varphi(x+ \omega)$$
and $\psi \in \mathcal{D}$, the support is in a fixed compact interval and 
$$\| \psi \|_1 \le \| \varphi\|_1 \|f_1\| \|f_2\|. $$
Hence the first term converges in operator topology to an operator which we call  $\frac{1}{x +\i 0 - \Omega}$
for $u \to +0$ and  $\frac{1}{x -\i 0 - \Omega}$
for $u \to -0$ .

In the second term we may write
$$\frac{1}{(x +\i u - \omega_1)(x+ \i u - \omega_2) }=
\frac{1}{\omega_2 -\omega_1}
(\frac{1 }{x+ \i u - \omega_1}-\frac{1}{x + \i u - \omega_2})$$
$$= \frac{1}{\omega_2-\omega_1} \int_0^1 \d t
\frac{\d}{\d t} \frac{1}{x + \i u - \omega_1- t(\omega_2-\omega_1)}$$ $$= - \int_0^1 \d t
\frac{\d}{\d x} \frac{1}{x + \i u - \omega_1- t(\omega_2-\omega_1)}.$$ 
Introduce
$$\alpha_u(x)= \varphi(x)/C(x + \i u)$$
then the second term gets
\begin{multline*} - \int_0^1 \d t  \iiint \d \omega_1 \d \omega_2 \d x \overline{f_1(\omega_1)}g(\omega_1)\overline{h(\omega_2)} 
f_2(\omega_2)\alpha_u(x)\\
\times\frac{\d}{\d x} \frac{1}{x + \i u - \omega_1- t(\omega_2-\omega_1)}\end{multline*}
\begin{multline*} =\int_0^1 \d t  \iiint \d \omega_1 \d \omega_2 \d x \overline{f_1(\omega_1)}g(\omega_1)\overline{h(\omega_2)} 
f_2(\omega_2) (\partial_x\alpha_u)(x)\\\times
 \frac{1}{x + \i u - \omega_1- t(\omega_2-\omega_1)}= \int  \d x \frac{\chi_u(x)}{x + \i u}\end{multline*}
 with 
$$ \chi_u(x)=\int_0^1 \d t  \iint \d \omega_1 \d \omega_2 \overline{f_1(\omega_1)}g(\omega_1)\overline{h(\omega_2)} 
f_2(\omega_2) (\partial_x\alpha_u)(x+ \omega_1 + t(\omega_2-\omega1)).$$
Observe that $1/C(x+\i u)$  is a $C^\infty$-  function of $x$ and that it is uniformly $C^\infty$  for
$u \in I$.	Hence
$$ \| \chi_u \|_1 \le \|f_1\| \|f_2\| \|g\| \|h\| \| \alpha_u\|_1 \le \mathrm{const} \|f_1\| \|f_2\| \| \varphi \|_2$$
\end{pf}

 \begin{prop} The resolvent function $R(z)$ can be extended to a distribution on the whole plane,
 which fulfills the distribution resolvent equation. 
 \end{prop}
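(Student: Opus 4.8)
The plan is to assemble the distribution $R(z)$ on the whole plane by gluing together the locally defined resolvent distributions over the various pieces of the complement of the spectral set. The spectral set of $R(z)$ consists of the two slits $[-c,-1]$ and $[1,c]$, together with the finitely many zeros of $C(z)$ lying outside $G_1\times I$, each of which is an isolated pole of finite order (indeed of order at most $2$, as the analysis of $C(z)$ shows). First I would cover $\mathbb{C}$ by open sets: the strip-neighborhood $G_1\times I$ of the slits, small punctured disks $U_1,\dots,U_N$ around the zeros of $C(z)$, and the open set $\Omega_0$ where $R(z)$ is holomorphic (the resolvent set). On $G_1\times I$ Proposition 1.17 gives the boundary values $R_{\pm\i 0}$ in operator norm, uniformly on $\|\varphi\|_2\le 1$ with fixed compact support, so the construction preceding Proposition 1.13 applies: $u\mapsto R_u(\varphi_u)$ is integrable and $R(\varphi)=\int\d u\, R_u(\varphi_u)$ defines a distribution on $G_1\times I$, which by Theorem 1.9 satisfies the distribution resolvent equation there. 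On each $U_i$ Proposition 1.10 produces a resolvent distribution extending $R$ across the pole. On $\Omega_0$ the holomorphic function $R(z)$ is itself a resolvent distribution.

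Next I would glue. The overlaps of these open sets lie in the resolvent set, where every one of the locally defined distributions restricts to the same continuous function $R(z)$ (for the slit piece this is exactly the statement that $\int \d u\, R_u(\varphi_u)$ reduces to the ordinary integral when $\varphi$ is supported off the slit; for the pole pieces it is built into Proposition 1.10, whose $R_0(z)$ part is the regular part and whose $\mathcal{P}/(z-w_i)^{k+1}$ parts agree with the Laurent tail as locally integrable functions away from $w_i$). Hence the local distributions agree on overlaps, and by the standard gluing principle for distributions (Schwartz \cite{Schwartz1}, p.\ 26, already invoked in the proof of Proposition 1.11) they patch to a single distribution $R(z)$ on all of $\mathbb{C}$ whose restriction to $\Omega_0$ is the original resolvent function.

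Finally I would verify the distribution resolvent equation globally. This is where Theorem 1.9 does the real work: take the closed Lebesgue-null set $G_0$ to be the union of the two slits and the finitely many zeros of $C(z)$, take $G_n$ to be a shrinking family of closed neighborhoods of $G_0$, and observe that $R(\varphi)=\lim_{n}\int_{\mathbb{C}\setminus G_n}\d^2 z\,\varphi(z)R(z)$, since on $\mathbb{C}\setminus G_n$ the glued distribution coincides with the holomorphic resolvent function and the contributions near the slits and near each pole tend to $0$ by the uniform-convergence estimates of Propositions 1.17 and 1.10 (the $\mathcal{P}/(z-w_i)^{k+1}$ terms being locally integrable against $\varphi$). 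Then Theorem 1.9 gives the distribution resolvent equation at once. The main obstacle is the bookkeeping at the junctions where a slit endpoint $\pm 1$ or $\pm c$ meets a possible zero of $C$ on the real axis: one must make sure $G_1\times I$ is chosen so that no zero of $C(z)$ other than those handled by Proposition 1.10 lies in its closure, and that the slit endpoints are genuinely interior to $G_1\times I$; the analysis of $C(x)$ for $|x|\ge c$ and near $x=\pm 1$ done above is exactly what guarantees this separation, so the cover can be arranged with all overlaps inside the resolvent set.
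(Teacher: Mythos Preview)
Your proposal is correct and follows essentially the same route as the paper's proof: extend $R$ locally near the poles via Proposition~1.10 and near the slits via the subsection~1.8 construction (with the convergence supplied by the preceding proposition on $R_u\to R_{\pm 0}$), glue the pieces over overlaps contained in the resolvent set, and then invoke Theorem~1.9 with $G_0$ the Lebesgue-null spectrum to obtain the distribution resolvent equation globally. The paper's version is terser (it does not spell out the gluing or the endpoint bookkeeping you mention), but the skeleton is identical.
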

 \begin{pf}
The resolvent function $ R(z)=(z-H)^{-1}$ is defined and holomorphic  outside the spectrum, which consists of $G$ and the zeros of 
$C(z)$, where $R(z)$ has simple or double poles. Outside the spectrum the resolvent function clearly is a distribution,
in the neighborhood of the spectrum the resolvent function can be extended to a distribution by subsections 1.7 and 1.8. So $R$ can be extended to a distribution on the whole plane
and as the spectrum has Lebesgue measure zero, the distribution $R$ fulfills the distribution resolvent equation.      
\end{pf}
 \begin{prop}
We calculate the spectral distribution. We write $ \delta_2$ for the two dimensional  $\delta$- function in order to dstinquish	from the one dimensional one. If there are two zeros $\ne 0$ we obtain
$$ M(z)=M(x+\i u)=r_+\delta_2(z-z_0)+r_-\delta_2(z+z_0)+\delta(u)\mu(x)$$
where $r_\pm$ are the residues of $R(z)$ at the points $\pm z_0$ .
In the case of a double zero at $z=0$ 
we expand 
$$ R(z) =  z^{-2}a+ z^{-1}p_0+ \cdots$$
and
$$M(z)=M(x + \i u)=  p\delta_2(z) - a \partial  \delta_2(z) + \mu(x)\delta(u),$$ 
In both cases
$$ \mu(x)= \frac{1}{2\pi \i}(R(x-\i 0)-R(x+\i 0)).$$
We consider the sesquilinear form 
  $$ f_1,f_2 \in \mathcal{D}(G)\to \langle f_1| R(z)| f_2 \rangle. $$
  It  is a distribution in three variables , the variables $\omega_1,\omega_2$ of the functions $f_1,f_2$ and $z$. 
  In the case of two zeros
$$ M(z) = |\alpha_+\rangle \langle\alpha_+'|\delta(z-z_0)+ |\alpha_- \rangle \langle \alpha_-'|\delta(z+z_0)+
 \delta(u) |\alpha_x \rangle \langle \alpha_x' |$$
 with the right resp. left usual eigenvectors
 \begin{align*}
&| \alpha_\pm \rangle = \frac{1}{\sqrt{ \langle h|(\pm z_0-\Omega)^{-2}|g \rangle}}\frac{1}{ \pm z_0-\Omega}| g \rangle\\&
 \langle\alpha_\pm' |= \frac{1}{\sqrt{ \langle h|(\pm z_0-\Omega)^{-2}|g \rangle}}\langle h |\frac{1}{ \pm z_0-\Omega}
 \end{align*}
 and for $x \in G$ the right, resp. left generalized eigenvectors
 \begin{align*}&
 | \alpha_x \rangle = \frac{1}{\sqrt{C_1^2 + \pi^2C_2^2}}(C_1(x)| \delta_x \rangle + h(x)A(x))\\&
  \langle\alpha_x' |= \frac{1}{\sqrt{C_1^2 + \pi^2C_2^2}}(C_1(x) \langle\delta_x |  + g(x)A'(x))
  \end{align*}
  with
  \begin{align*}&
   C_1(x)=1-\int \d y g(y)h(y)\frac{\mathcal{P}}{x-y}&&
  C_2(x)=g(x)h(x)\\&
  A(x)=\frac{\mathcal{P}}{x-\Omega}| g \rangle &&
  A'(x)= \langle h |\frac{\mathcal{P}}{x-\Omega} \\&
  B(x) = g(x) | \delta_ x \rangle&&
  B'(x)=h(x) \langle \delta_x |\end{align*}
 In the case $C(0)=0$ we obtain
 $$M(z)=M(x + \i u)= \ p \delta_2(z) - a \partial  \delta_2(z) + \delta(u)| \alpha_x 
\rangle \langle\alpha'_x |,$$
where $\alpha_x \alpha_x'$ are given by the formulas above
and
$$ a=\frac{ \Omega^{-1}| g \rangle \langle h | \Omega^{-1}}{\langle h| \Omega^{-3}|g \rangle}$$
$$ p = \frac{\Omega^{-2}| g \rangle \langle h | \Omega^{-1}+
\Omega^{-1}| g \rangle \langle h | \Omega^{-2}
}{\langle h| \Omega^{-3}|g \rangle}$$
\\

Here $p^2=p$ and $a^2=0$ and $ap=pa=a$. 
We obtain for $\vartheta,\vartheta' = \pm$ and $x,x' \in \mathbb{R}$ the orthogonality relations
\begin{align*}&
 \langle \alpha_\vartheta' | \alpha_{\vartheta'}\rangle = \delta_{\vartheta,\vartheta'}&&
 \langle \alpha_x'| \alpha_\vartheta \rangle =\langle \alpha_\vartheta | \alpha_ x\rangle = 0 &&
\langle \alpha_x| \alpha_{x'}\rangle = \delta(x-x')\end{align*}
Analogous relations hold for the case $C(0)=0$. The spectral distribution is complete, i.e.

$$M(1)= \int d^2 z M(z)= \sum_{\vartheta = \pm}| \alpha_\vartheta ' \rangle \langle \alpha _\vartheta |
+ \int_G \d x | \alpha_x ' \rangle \langle \alpha _x |= 1$$
resp.
$$M(1)= \int d^2 z M(z)= p
+ \int_G \d x | \alpha_x ' \rangle \langle \alpha _x |= 1$$
\end{prop}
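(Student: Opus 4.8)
The plan is to read off $M(z)=(1/\pi)\dc R(z)$ from the singularity structure of the resolvent distribution $R$ supplied by Proposition~2.5. Since $R$ is holomorphic off the spectrum of $H$, $M$ is supported on the spectral set, which is the union of the two slits $\overline G$ (inherited from $R_\Omega$) and the finitely many isolated zeros of $C(z)$ lying off $G_1\times I$ (real in $(-1,1)$ or purely imaginary, by the monotonicity analysis above). I would treat these pieces separately: near a zero of $C$, Krein's formula exhibits a genuine pole of $R$, so Proposition~1.10 applies directly; on $G_1\times I$ equation~(1.8) gives $M(x+\i y)=\mu(x)\delta(y)$ with $\mu(x)=\frac1{2\pi\i}(R(x-\i 0)-R(x+\i 0))$, the distributions $R(x\pm\i 0)$ existing in operator norm by Proposition~2.4; on the regular set $\dc R=0$. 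Adding the pieces gives the displayed forms of $M$ in the two cases $C(0)\neq 0$ (two simple zeros $\pm z_0$) and $C(0)=0$ (double zero at the origin). That $\langle f_1|R(z)|f_2\rangle$ is a distribution in the three variables $\omega_1,\omega_2,z$ I would see from the kernels $\delta(\omega_1-\omega_2)/(z-\omega_1)$ of $R_\Omega$ and $g(\omega_1)h(\omega_2)/\bigl(C(z)(z-\omega_1)(z-\omega_2)\bigr)$ of the rank-one part, using that $1/(z-\omega)$ is locally integrable in $z$ (Proposition~1.4), that $1/C(z)$ is $C^\infty$ near $\overline G$, and that Proposition~2.4 controls the slits.

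For the pole at a simple zero $\pm z_0$ I would insert the Laurent expansion of Krein's formula; with $C'(z)=\langle h|(z-\Omega)^{-2}|g\rangle$ the residue is $r_\pm=(\pm z_0-\Omega)^{-1}|g\rangle\langle h|(\pm z_0-\Omega)^{-1}/\langle h|(\pm z_0-\Omega)^{-2}|g\rangle$, which is rank one and equals $|\alpha_\pm\rangle\langle\alpha_\pm'|$ for the normalised vectors of the statement; $r_\pm^2=r_\pm$ is part of Proposition~1.10 and is the same as $\langle\alpha_\pm'|\alpha_\pm\rangle=1$, which also follows by direct cancellation, and $H|\alpha_\pm\rangle=\pm z_0|\alpha_\pm\rangle$ follows either from Proposition~1.7 or directly from $(z_0-H)(z_0-\Omega)^{-1}|g\rangle=|g\rangle\,C(z_0)=0$. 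At the double zero I would use that $C$ is even, so $C(0)=C'(0)=0$ and $C(z)=\tfrac12 C''(0)z^2+\cdots$, expand $(z-\Omega)^{-1}|g\rangle=-\Omega^{-1}|g\rangle-z\,\Omega^{-2}|g\rangle-\cdots$, identify $\tfrac12 C''(0)=\langle h|\Omega^{-3}|g\rangle$, and read off the stated $a$ (coefficient of $z^{-2}$) and $p$ (coefficient of $z^{-1}$). The relations $p^2=p$, $a^2=0$, $ap=pa=a$ come from Proposition~1.10, and may also be checked directly from $\langle h|\Omega^{-2}|g\rangle=\langle h|\Omega^{-4}|g\rangle=0$ (valid since $g$ is even, $h$ odd, and $G$ symmetric).

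The slit contribution is the computational core. I would substitute into $R=R_\Omega+R_\Omega|g\rangle\langle h|R_\Omega/C$ the boundary values $R_\Omega(x\pm\i 0)=\mathcal P/(x-\Omega)\mp\i\pi\,\delta(x-\Omega)$ (Lemma~2.3) and $C(x\pm\i 0)=C_1(x)\pm\i\pi C_2(x)$ with $C_1,C_2\in C^\infty(G_1)$ and $C_2=gh\neq 0$ on $G$, so $1/(C_1^2+\pi^2C_2^2)$ is smooth there. Writing $A(x)=\frac{\mathcal P}{x-\Omega}|g\rangle$, $B(x)=g(x)|\delta_x\rangle=\delta(x-\Omega)|g\rangle$ and the primed analogues, one has $R_\Omega(x\pm\i 0)|g\rangle=A\mp\i\pi B$, $\langle h|R_\Omega(x\pm\i 0)=A'\mp\i\pi B'$; putting $\mu(x)=\frac1{2\pi\i}(R(x-\i 0)-R(x+\i 0))$ over the common denominator $C_1^2+\pi^2C_2^2$, the numerator collapses to $2\i\pi\bigl(C_2(AA'-\pi^2BB')+C_1(AB'+BA')\bigr)$, so dividing by $2\pi\i$ and merging the $|\delta_x\rangle\langle\delta_x|$ terms via $B(x)B'(x)=C_2(x)|\delta_x\rangle\langle\delta_x|$ yields exactly $\mu(x)=|\alpha_x\rangle\langle\alpha_x'|$ with the stated $\alpha_x,\alpha_x'$. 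These are $C^\infty$ and $\neq 0$ on $G$ and vanish for $x\notin G$ (there $|\delta_x\rangle=0$ on $\mathcal D(G)$ and $h(x)=0$), so Proposition~1.12 applies: it gives multiplicativity of $\mu$ (a consequence of Theorem~1.6), the eigenvector equations $H|\alpha_x\rangle=x|\alpha_x\rangle$, $\langle\alpha_x'|H=x\langle\alpha_x'|$ (via $HR=RH=-1+zR$, Proposition~1.7), the $L^2$-membership of $\int\varphi(x)|\alpha_x\rangle\,dx$ and $\int\varphi(x)\langle\alpha_x'|\,dx$, and the continuum orthogonality $\langle\alpha_x'|\alpha_y\rangle=\delta(x-y)$.

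Finally, the discrete normalisation, the cross orthogonality, and completeness. Proposition~1.11 applied to the poles $\pm z_0$ gives $r_+r_-=0$, hence $\langle\alpha_+'|\alpha_-\rangle=0$; Corollary~1.14 applied to each pole off $G_1\times I$ gives $b_k|\alpha_x\rangle=0$ and $\langle\alpha_x'|b_k=0$, whence $\langle\alpha_\pm'|\alpha_x\rangle=\langle\alpha_x'|\alpha_\pm\rangle=0$ in the two-zero case and $p|\alpha_x\rangle=a|\alpha_x\rangle=0$ together with the transposed relations in the double-zero case. Since $\Omega$ is bounded on $L^2(G)\subset L^2([-c,c])$ and $|g\rangle\langle h|$ is bounded, $H$ is bounded, so Theorem~1.8 shows $M$ has compact support and $M(1)=1_{L(V)}$; decomposing $M(1)=\int M(z)\,d^2z$ into the pieces above — the $\delta_2(z\mp z_0)$ terms integrating to $r_\pm=|\alpha_\pm\rangle\langle\alpha_\pm'|$, the $\partial\delta_2(z)$ term to $0$, and the $\mu(x)\delta(u)$ term to $\int_G|\alpha_x\rangle\langle\alpha_x'|\,dx$ — gives the two completeness identities. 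I expect the main obstacle to be exactly the bookkeeping in the slit computation: keeping the principal-value and $\delta(x-\Omega)$ parts of $R_\Omega(x\pm\i 0)$ separate from those of $1/C(x\pm\i 0)$, and recognising that the surviving combination reassembles into a single rank-one form with precisely the normalisation $1/\sqrt{C_1^2+\pi^2C_2^2}$.
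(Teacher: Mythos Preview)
Your proposal is correct and follows essentially the same route as the paper: Proposition~1.10 for the pole contributions and the Laurent coefficients $a,p$, equation~(1.8) and the boundary values from Lemma~2.3/Proposition~2.4 for the slit term, the algebraic factorisation of $\mu(x)$ into $|\alpha_x\rangle\langle\alpha_x'|$, then Proposition~1.12 and Corollary~1.14 for the orthogonality relations and Theorem~1.8 (boundedness of $H$) for completeness. The only point the paper spells out that you leave implicit is the verification that $\alpha_x\neq 0$ for $x\in G$: the paper exhibits a concrete test function $f(y)=(x-y)\psi(x-y)$ with $\psi(0)=1$ and small support, which kills the $|\delta_x\rangle$ term and isolates the $A(x)$ term; you may want to add such a sentence, since Proposition~1.12 needs this hypothesis.
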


\begin{pf}
We recall 
      \begin{align*}
       R(z)&=R_\Omega(z)+ R_\Omega(z)|g \rangle \langle h| R_\Omega(z)/C(z)\\
        C(z) &= 1- \langle h| R_\Omega (z)|g \rangle
        = 
       1 - \int_G\d y \frac{g(y)h(y)}{z- y}= 1 + \int _1^c \d y \frac{2 y g(y)^2}{ z^2-y^2}
      \end{align*}
Assume at first, that $C(z)$ has two zeros $\pm z_0= \pm \i u_0$ or $ \pm z_0= \pm x_0$.
.
 The residuum of
the complex function $R(z)$ at the points $ \pm z_0$ is given by
$$r_\pm= \big(\frac{1}{\pm z_0 - \Omega}|g \rangle \langle h | \frac{1}{\pm z_0-\Omega}\big)/
C'(\pm z_0)=$$ $$ \big(\frac{1}{\pm z_0 - \Omega}|g \rangle \langle h | \frac{1}{\pm z_0-\Omega}\big)
\frac{1}
{\langle h| (\pm z_0-\Omega)^{-2}|g \rangle}$$

Using the results of  proposition 1.10 we obtain
$$ M(z) = M(x+\i u)= r_+\delta_2(z-z_0)+ r_-\delta_2(z+z_0)+ \delta(u)\mu(x)$$
where 
      $$\mu(x)= \frac{1}{2 \pi \i}(R(x-\i 0)-R(x+ \i 0)$$
   We  consider the case $C(0)=0$ .We expand at the origin
$$ C(z)= 1+ \sum_{n=0}^\infty z^n \langle h| \Omega ^{-(n+1)}|g \rangle =
z^2 \langle h| \Omega^{-3}|g \rangle+ O(z^3)=$$ $$
-z^2 \int _1^\infty 2g(y)/y^3 \d y + O(z^3)  $$

$$R(z)=z^{-2}\frac{ \Omega^{-1}| g \rangle \langle h | \Omega^{-1}}{\langle h| \Omega^{-3}|g \rangle}+
 z^{-1} \frac{\Omega^{-2}| g \rangle \langle h | \Omega^{-1}+
\Omega^{-1}| g \rangle \langle h | \Omega^{-2}
}{\langle h| \Omega^{-3}|g \rangle}+ O(1)$$ $$= z^{-2}a+ z^{-1}p_0+ O(1)$$
and
$$M(z)=M(x + \i u)= \ p_0 \delta_2(z) - a \partial  \delta_2(z) + \mu(x)\delta(u).$$  
Here $p^2=p$ and $a^2=0$ and $ap=pa=a$ and $\mu(x)$ given by $R(x \pm \i 0)$ as above. We have
$$ C(x \pm \i 0)= 
1-  \langle h| \frac {\mathcal{P}}{x-\Omega}| g \rangle 
\pm \i \pi \langle h| \delta(x- \Omega) |g \rangle = C_1(x) \pm \i \pi C_2(x) $$

    $$   R(x \pm \i 0)=
    R_\Omega(x \pm \i 0)+ R_\Omega(x \pm \i 0)|g \rangle \langle h| R_\Omega(x \pm \i 0)/C(x \pm \i 0)$$
$$= \frac{\mathcal{P}}{x-\Omega}\mp \i \pi \delta(x- \Omega) + ( A \mp \i \pi B)(A' \mp \i \pi B') / (C_1\pm \i  \pi C_2) $$
 Use the relation  $\delta(x-\Omega)=|\delta_x \rangle \langle \delta_x|$. and obtain
\begin{multline*} 
     \mu(  x)=\frac{1}{2\pi \i}
   (R(x- \i 0)-R(x+ \i 0) )\\
      = \delta(x-\Omega)+ \frac{1}{C_1^2+\pi^2 C_2^2}
       (AC_2A'+AC_1B'+BC_1A'- \pi^2 BC_2B')   \\ 
       =\frac{1}{C_1^2+\pi^2 C_2^2}\big((C_1^2+\pi^2 C_2^2)|\delta_x \rangle \langle \delta_x|+  AC_2A'+AC_1B'+BC_1A'- \pi^2 BC_2B'\big)\\
 = \frac{1}{C_1^2+\pi^2 C_2^2}\big(Ah(x)g(x)A'+AC_1h(x) \langle \delta_x|
+ g(x)|\delta_x \rangle C_1 A' + C_1^2 | \delta_x \rangle \langle \delta_x |\big)\\=
|\alpha_x \rangle \langle \alpha'_x |
 \end{multline*}

That $M$ is complete follows from theorem 1.8..
The  orthogonality relations follow from subsections 1.7 and 1.8.  Obviously  
$x \mapsto\langle	f|\alpha_x 	\rangle \rangle,\langle\alpha_x'|f\rangle \rangle$
 is $C^\infty$ for $f \in \mathcal{D}(G)$. 
To prove that $\alpha_x \ne 0$ choose
a $\psi$  with $ \psi	(0)=1$ and support in $[-\varepsilon, \varepsilon]$ and $\varepsilon$ sufficiently small and put 
$ f(y) =(x-y)\psi(x-y)$.

  \end{pf}
 
 \begin{rem}
 The orthogonality relation $\langle \alpha_x| \alpha_{x'}\rangle = \delta(x-x')$  can be easily checked by hand
 using lemma 1.1.
 \end{rem}

\begin{pf}
$$\langle \alpha_x| \alpha_{x'}\rangle= \frac{1}{\sqrt{C_1(x)^2 + \pi^2C_2(x)^2}}
\frac{1}{\sqrt{C_1(y)^2 + \pi^2C_2(y)^2}}\int \d \omega T(x,y,\omega)$$
with
$$ T(x,y,\omega)=(C_1(x)\delta(x-\omega)+g(x)h(\omega)\frac{\mathcal{P}}{x-\omega})
(C_1(y)\delta(y-\omega)+g(\omega)h(y)\frac{\mathcal{P}}{y-\omega})$$
$$=C_1(x)C_1(y)\delta(x-\omega)\delta(y-\omega	)$$ $$+C_1(x)\delta(x-\omega)g(\omega)h(y)\frac{\mathcal{P}}{y-\omega}
+g(x)h(\omega)\frac{\mathcal{P}}{x-\omega}C_1(y)\delta(y-\omega)$$
$$+g(x)h(y)g(\omega)h(\omega)\bigg(\frac{\mathcal{P}}{y-x}\left(\frac{\mathcal{P}}{x-\omega} -\frac{\mathcal{P}}{y-\omega}
\right) + \pi^2 \delta(x-\omega)\delta(y-\omega)\bigg) $$
using lemma 1.1. Then
$$ \int \d \omega T(x,y,\omega)=C_1(x)^2 \delta(x-y)+C_1(x)g(x)h(y)\frac{\mathcal{P}}{y-x}
+C_1(y)g(x)h(y)\frac{\mathcal{P}}{x-y}$$										
$$+g(x)h(y)\frac{\mathcal{P}}{y-x}\big(1-C_1(x)-1+C_1(y)\big)+\pi^2 g(x)^2h(x)^2\delta(x-y)$$ $$
= (C_1(x)^2+\pi^2C_2(x)^2)\delta(x-y)$$
From there follows the orthogonality relation.
\end{pf}


\begin{thebibliography}{10}

\bibitem{EvW} G.V. Efimov, W. von Waldenfels, R. Wehrse,
 Analytical solution    of the non-discretized radiative transfer equation for a slab of finite optical depth. J.Spectrosc.Radiative
Transfer {\bf 53},59-74 (1995)
\bibitem  {Gelfand1}I. Gelfand, N. Vilenkin. Generalized functions. vol 1 (Academic Press, New York 1964)
\bibitem  {Gelfand2}I. Gelfand, N. Vilenkin. Generalized functions. vol 4 (Academic Press, New York 1964)
\bibitem{Hille}E. Hille, R.S. Phillips. Functional Analysis and Semigroups. (Amer. Math. Soc. Providence 1968)
\bibitem{Langer}H.Langer.
 Spectral functions of definitizable operators in Krein spaces, Functional Analysis Proceedings of a conference held at Dubrovnik, Yugoslavia, November 2-14, 1981, Lecture Notes in Mathematics, 948, Springer-Verlag Berlin-Heidelberg-New York, 1982, 1-46, ISSN 0075-8434.
\bibitem{Rham}G. de Rham Vari\'et\'es differentiables.( Hermann,Paris,1955).
\bibitem{Schwartz1}L. Schwartz. Th\'eorie des distributions I (Hermann,Paris ,1951)
\bibitem{Schwartz2}L. Schwartz.  Th\'eorie des noyaux. Proceedings of the international congress of mathematicians , 1950, vol 1, p.220-230.
\bibitem{Schwartz3}L. Schwartz. Th\' eorie des distributions \`a valeurs vectorielles . Ann. Inst. Fourier {\bf 7}(1957),p.1-142
\bibitem{Schwartz4}L.Schwartz. Th\'eorie des distributions II (Hermann,Paris ,1952)
\bibitem {vW} W. von Waldenfels. A measure theoretical approach to quantum stochastic processes. Springer Verlag 2014.
Lecture Notes in Physics 878.(Springer Verlag Berlin Heidelberg 2014)
\bibitem{vW1} W. von Waldenfels. Results on the Spectral Schwartz Distribution. arXiv:1807.02618(2018)
\end{thebibliography}
\end{document}